\newtheorem{theorem}{Theorem}[section]
\newtheorem{lemma}[theorem]{Lemma}
\newtheorem{proposition}[theorem]{Proposition}
\newtheorem{corollary}[theorem]{Corollary}
\newtheorem{definition}[theorem]{Definition}
\newtheorem{remark}[theorem]{Remark}
\newtheorem{claim}{Claim}
\newtheorem{example}[theorem]{Example}
\newcommand{\omu}{\overline{\mu}}
\begin{document}

\title{The Milnor invariants of clover links}
\author{Kodai Wada}
\author{Akira Yasuhara}
\thanks{ 
The second author is partially supported by a JSPS Grant-in-Aid for Scientific Research (C) 
($\#$26400081). }

\address{Department of Mathematics, School of Education, Waseda University, Nishi-Waseda 1-6-1, Shinjuku-ku, Tokyo, 169-8050, Japan}
\address{Department of Mathematics, Tokyo Gaugei University, 4-1-1 Nukuikita-machi, Koganei-shi, Tokyo, 184-8501, Japan}
\email{k.wada@akane.waseda.jp\\yasuhara@u-gakugei.ac.jp}
\date{\today}
\maketitle

\begin{abstract}
J.P. Levine introduced  a clover link to investigate the indeterminacy of the Milnor invariants of a link. 
It is shown that for a clover link, the Milnor numbers  of length at most $2k+1$ are well-defined 
if those of length at most $k$ vanish, 
and that the Milnor numbers of length at least $2k+2$ are not well-defined  
if those of length $k+1$ survive. 
For a clover link $c$ with the Milnor numbers of length at most $k$ vanishing,  
we show that the Milnor number $\mu_c(I)$ for a sequence $I$  is well-defined 
up to the greatest common devisor of $\mu_{c}(J)'s$, where 
$J$ is a subsequence of $I$ obtained
by removing at least $k+1$ indices.  
Moreover, if $I$ is  a non-repeated sequence with length $2k+2$, 
the possible range of $\mu_c(I)$ is given explicitly.  
As an application, we give an edge-homotopy classification of $4$-clover links.
\end{abstract}

\section{Introduction}
The {\em Milnor invariant} introduced by J. Milnor~\cite{M}, \cite{M2}. 
For an oriented ordered $n$-component link $L$ in the $3$-sphere $S^3$ with 
peripheral information,   
the {\it Milnor number $\mu_{L}(I)$}, which is an integer, is 
specified by a finite sequence $I$ in $\{ 1,2,\ldots,n\}$.
The Milnor $\omu$-invariant $\omu_{L}(I)$ is the residue class of $\mu_{L}(I)$ modulo
the greatest common devisor of $\mu_{L}(J)$'s, where $J$ is obtained from proper subsequence of $I$ by permuting cyclicly.
The length of the sequence $I$ is called the {\it length} of $\omu_{L}(I)$ and denoted by $|I|$.
His original definition of the Milnor invariant eliminates the indeterminacy of the possible variations of the Milnor numbers caused by different choices of peripheral elements.

In~\cite{L}, J. P. Levine examined the Milnor invariants from the point of view of {\it based} links, in order to understand the indeterminacy.
A based link is a link for which some peripheral information is specified,
i.e., meridians (the weakest) or both meridians and longitudes (the strongest).
It is known that these invariants are completely well-defined for the strongest form of basing (disk links~\cite{Le} or string links~\cite{HL}).
As basing only slightly stronger than the specification of meridians, he introduced an {\it n-clover link} which is an embedded graph consisting of $n$ loops, each loop connected to a vertex by an edge in $S^3$.
The Milnor number of a clover link is defined up to flatly isotopy
and it is shown that
those of length $\leq k$ vanish implies those of length $\leq 2k+1$ are completely non-indeterminate~\cite{L}.

The first author~\cite{W} redefined the Milnor number of a clover link up to ambient isotopy as follows.
Given an $n$-clover link $c$, we construct an $n$-component {\it bottom tangle} $\gamma(F_{c})$ by using a {\em disk/band surface} $F_{c}$ of $c$.
In \cite{L}, Levine defined the Milnor number of a bottom tangle.
Therefore we define the {\it Milnor number $\mu_{c}$ of an $n$-clover link $c$} to be the Milnor number $\mu_{\gamma(F_c)}$.
(In \cite{L}, a bottom tangle is called a string link.
The name \lq bottom tangle\rq ~follows K. Habiro \cite{H2}.)
In \cite{W}, it is shown that the same result as Levine~\cite{L} holds while 
there are infinitely many choices of $\gamma(F_c)$ for $c$. 

Unfortunately, the Milnor numbers of length at least $2k+2$ are not well-defined  
if those of length $k+1$ survive. 
In this paper, we show that the Milnor number $\mu_c(I)$ of  a sequence $I$  
modulo $\delta_c^k(I)$ is well-defined 
if the Milnor numbers of length~$\leq k$ vanish,   
where $\delta_c^k(I)$ is the greatest common devisor of $\mu_{c}(J)'s$, where  
$J$ is range to over all subsequences of $I$ obtained by removing at least $k+1$ indices. 
In fact, we have the following theorem. 
\begin{theorem}
\label{mainthmmodulo}
Let $c$ be an $n$-clover link and $l_c$ a link which is the disjoint union of loops of $c$.
If the Milnor numbers of $l_c$ for sequences with length $\leq k$ vanish,
then the residue class of $\mu_c(I)$ modulo $\delta_c^k(I)$ is an invariant for any sequence $I$.
\end{theorem}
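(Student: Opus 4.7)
The plan is to reduce the theorem to a comparison between two choices of disk/band surface. Because the Milnor numbers of a bottom tangle are themselves well-defined invariants (of the isotopy class of the tangle), the only source of indeterminacy in $\mu_c(I)$ is the choice of disk/band surface $F_c$ entering the definition $\mu_c := \mu_{\gamma(F_c)}$. Thus it suffices to show that for any two disk/band surfaces $F$ and $F'$ of the same $n$-clover link $c$, the difference $\mu_{\gamma(F')}(I)-\mu_{\gamma(F)}(I)$ lies in the ideal generated by the integers $\{\mu_c(J)\}$ where $J$ runs over subsequences of $I$ obtained by deleting at least $k+1$ indices.

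The first step is to present the passage from $F$ to $F'$ by a finite sequence of elementary moves on disk/band surfaces (ambient isotopy together with band slides and ``pushing a band across a loop''). It then suffices to analyze the effect of a single such move. The effect on the associated bottom tangle is a local modification that replaces each $i$-th strand by its band sum with a word in the loops of $c$; on longitudes this amounts to replacing $\lambda_i(F)$ by $\lambda_i(F')=\lambda_i(F)\cdot w_i$ (up to conjugation), where $w_i$ is a product of conjugates of longitudes of the sublink $l_c$ of loops.

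The second step is to invoke the Milnor--Magnus dictionary. The Milnor number $\mu_{\gamma}(I)$ equals, up to sign, a fixed coefficient in the Magnus expansion (in the meridional generators) of the relevant longitude. The hypothesis that the Milnor numbers of $l_c$ of length $\le k$ vanish forces the longitudes of $l_c$ to lie in the $(k+1)$-st term of the lower central series of the link group, by Milnor's theorem. Hence each $w_i$ lies in this $(k+1)$-st term, and its Magnus expansion has the form $1+(\text{terms of degree} \ge k+1)$. Expanding
\[
 \text{(coefficient of the }I\text{-monomial in }\lambda_i(F)\cdot w_i)
\]
and subtracting the corresponding coefficient for $\lambda_i(F)$, every surviving term is a product of a coefficient of $w_i$ (of degree $\ge k+1$) with a coefficient of $\lambda_i(F)$ indexed by some subsequence $J$ of $I$ with $|J|\le |I|-(k+1)$. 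The latter coefficient is (up to sign) the Milnor number $\mu_c(J)$, which is well-defined by the result of \cite{W} whenever $|J|\le 2k+1$; in general one controls it inductively via the same congruence modulo $\delta_c^k(J)$. This exhibits the difference as a $\mathbb{Z}$-linear combination of $\mu_c(J)$'s of the required type, hence divisible by $\delta_c^k(I)$.

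The main obstacle is the second step: one must verify that the words $w_i$ arising from any elementary change of disk/band surface are genuinely products of conjugates of longitudes of $l_c$ (so that the hypothesis applies uniformly), and one must organize the Magnus expansion so that the resulting inner coefficients are identifiable with $\mu_c(J)$ for subsequences $J$ missing at least $k+1$ indices of $I$. The delicate point is that the Milnor numbers of $c$ involve both the loops and the connecting edges, while the vanishing hypothesis only concerns the loops; dealing with the edge-contributions requires tracking which meridians appear in the expansion of $w_i$ and checking that the deleted indices can indeed be taken arbitrarily among those of $I$, so that taking the gcd recovers precisely $\delta_c^k(I)$.
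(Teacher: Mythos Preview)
Your overall strategy is the same as the paper's: reduce to comparing two disk/band surfaces, relate the resulting bottom tangles by an explicit move (the paper calls it an \emph{SL-move}, coming from Lemma~\ref{lemmaPB-move}), and control the change in each Magnus coefficient. The heart of the paper's argument is Proposition~\ref{propmodulo}, proved exactly along the lines you sketch.

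There is, however, a genuine gap in your second step. You claim that the correction word $w_i$ is a product of conjugates of longitudes of $l_c$, and that the hypothesis places those longitudes in the $(k+1)$st lower central term, so that $E(w_i)=1+(\text{degree}\ge k+1)$. The lower-central claim is off by one: vanishing of Milnor numbers of length $\le k$ means the degree-$\le k-1$ Magnus coefficients of each longitude vanish, which only gives $\lambda_l\in G_k$, not $G_{k+1}$. With your description of $w_i$ you would therefore obtain only $E(w_i)=1+(\text{degree}\ge k)$, and the difference $\mu_{\gamma'}(I)-\mu_\gamma(I)$ would be a combination of $\mu_\gamma(J)$ with $J$ missing only $k$ indices, not $k+1$; the conclusion would be modulo the wrong ideal. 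What actually happens under an SL-move is that the correction factors $u_i$ are the longitudes of the inserted string link, expressed as words in the \emph{commutators} $\beta_l=[\lambda'_l,\alpha'_l]$ rather than in the $\lambda'_l$ themselves. It is precisely this commutator that buys the extra degree: since $E(\lambda'_l)=1+(\text{degree}\ge k)$, one gets $E(\beta_l)=1+(\text{degree}\ge k+1)$, hence $E(u_i)=1+(\text{degree}\ge k+1)$ as needed.

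A second point your outline underplays: under the move one has not only $\lambda'_j=u_j\lambda_j u_j^{-1}$ but also $\alpha_i=u_i^{-1}\alpha'_i u_i$. Thus to compare Magnus coefficients one must first re-express $\lambda_j$ in the \emph{new} meridional generators (a substitution $X_i\mapsto Y_i+(\text{degree}\ge k+2)$ inside the expansion), and then account for the conjugation by $u_j$. The paper carries out both steps separately and shows each difference lands in the ideal $D_j^k$; your formula $\lambda'_j=\lambda_j\cdot w_j$ (even ``up to conjugation'') captures only the second.
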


\begin{remark}
In contrast to the $\omu$-invariant for a link in $S^3$, we do not need to take cyclic permutation for 
getting $\delta_c^k(I)$. 
\end{remark}

It is an important property that
for non-repeated sequences,
the Milnor invariants of links are link-homotopy invariants~\cite{M}.
The first author showed that the Milnor numbers for any non-repeated sequence 
with length $\leq 3$ give an {\em edge-homotopy} classification of $3$-clover links~\cite{W},
where edge-homotopy \cite{T} is an equivalence relation,
which is a generalization of link-homotopy,
generated by crossing changes on the same spatial edge. 
We also discuss giving an edge-homotopy classification of $4$-clover links.
The Milnor numbers for non-repeated sequences with 
length 4 could be useful to have an edge-homotopy  
classification of 4-clover links. 
But they are not well-defined in general.  
Hence we consider the set of all Milnor numbers of length 4 for all disk/band 
surfaces of $c$. 
More generally, we define the following set; 
\[H_c(2k+2, j)=\displaystyle
\left\{\left. \sum_{S\in \mathcal{S}_j^{2k+1}}\mu_{\gamma(F_c)}(Sj)X_S~\right|~F_c:\text{a disk/band surface of c}\right\}\]
for each integer $j~(1\leq j\leq n)$,   where $\mathcal{S}_j^{2k+1}$ is the set of 
length-$(2k+1)$ non-repeated sequences  without containing $j$
and for a sequence $S=i_1i_2\ldots i_{2k+1}$, $X_S=X_{i_1}X_{i_2}\cdots X_{i_{2k+1}}$ 
is a monomial in non-commutative variables $X_1,...,X_n$.
Since $H_c(2k+2, j)$ consists of the Milnor numbers  
$\mu_{\gamma(F_c)}(Sj)~(S\in\mathcal{S}_j^{2k+1})$ for all  
disk/band surfaces of $c$, it is an invariant of $c$. 
While it seems too big to handle  $H_c(2k+2, j)$, we have the following theorem. 
\begin{theorem}
\label{mainthm2k+2}
Let $c$ be an $n$-clover link and $F_c$ a disk/band surface of $c$. 
If the Milnor numbers of $l_c$ for non-repeated sequences with length $\leq k$  vanish,
then we have the following: 
\[\begin{array}{l}
H_c(2k+2, j)=\\
\displaystyle
\hspace*{-.5em}
\left\{\sum_{\substack{|J|=|I|=k\\JIl\in \mathcal{S}_j^{2k+1}}}
\hspace{-1em}\mu_{\gamma(F_c)}(Jj)\mu_{\gamma(F_c)}(Il)
\left(\sum_{i_s\in\{J\}}\hspace{-.5em}m_{li_s}
X_{J_{<s}}(
X_{i_sIl}
-X_{i_slI}
-X_{Ili_s}
+X_{lIi_s}
)X_{J_{s<}}\right. \right. \\
\hspace*{.5em}+\displaystyle \left.\left.\Bigg.
m_{lj}(X_{IlJ}-X_{lIJ}-X_{JIl}+X_{JlI}) \Biggr) +
\sum_{S\in \mathcal{S}_j^{2k+1}}\hspace{-.5em}\mu_{\gamma(F_c)}(Sj)X_S
~\right|~ m_{pq}=m_{qp}\in {\mathbb Z}\right\},
\end{array}\]
where for a sequence $J=i_1\ldots i_m$,
$\{J\}=\{i_1,\ldots,i_m\}$ and $J_{<s}$ {\rm (}resp. $J_{s<}${\rm )} is a subsequence $i_1\ldots i_{s-1}$ {\rm (}resp. $i_{s+1}\ldots i_m${\rm )} of $J$ for 
$1\leq s\leq m$, and $X_{J_{<1}}$ and $X_{J_{m<}}$ are 
defined to be $1$.
\end{theorem}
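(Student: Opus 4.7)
The plan is to regard $H_c(2k+2,j)$ as the orbit of the polynomial $P(F_c)=\sum_{S\in\mathcal{S}_j^{2k+1}}\mu_{\gamma(F_c)}(Sj)X_S$ under the operation of changing the disk/band surface of $c$, and then to describe this orbit explicitly by analyzing one elementary move at a time. Concretely, I would first recall from \cite{W} how two disk/band surfaces $F_c$ and $F_c'$ of the same clover link $c$ differ: up to ambient isotopy they are related by a sequence of elementary band operations, and each such operation is parameterized by a pair of loop labels $(p,q)$ and an integer $m_{pq}$. The symmetry condition $m_{pq}=m_{qp}$ should be traced back to the symmetry of the associated handle-slide/linking pairing on the band-attaching data, i.e.\ sliding band $p$ over band $q$ contributes the same integer as sliding $q$ over $p$.

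Next I would compute the effect of a single elementary move with parameter $m_{pq}$ on the $j$th longitude of $\gamma(F_c)$. Such a move modifies exactly one string of the bottom tangle by a band-sum with a meridional conjugate, so in the Magnus expansion the $j$th longitude is multiplied by an element whose leading term is a commutator $[x_p,x_q]$ times a scalar $m_{pq}$. Using Theorem~\ref{mainthmmodulo} and the hypothesis that the Milnor numbers of $l_c$ of length $\leq k$ vanish, every Milnor number of length $\leq k+1$ is a well-defined invariant of $c$; consequently, when one expands the difference $P(F_c')-P(F_c)$ along the modified longitude, all terms of length strictly less than $2k+1$ are forced to vanish and only the product of two Milnor numbers of length $k+1$ survives. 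This will produce the announced quadratic contributions $\mu_{\gamma(F_c)}(Jj)\mu_{\gamma(F_c)}(Il)$ as coefficients.

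The main obstacle will be extracting the precise combinatorial shape
\[
X_{J_{<s}}\bigl(X_{i_sIl}-X_{i_slI}-X_{Ili_s}+X_{lIi_s}\bigr)X_{J_{s<}}
\]
together with the separate $m_{lj}$ term. The four-term alternating sum reflects the Magnus expansion of the commutator $[x_l,x_{i_s}]=x_lx_{i_s}-x_{i_s}x_l+(\text{higher order})$ inserted at the position of the letter $i_s$ in the word associated with $\mu(Jj)$, convolved against the longitude governing $\mu(Il)$; the four signs enumerate the four ways of interleaving the pair $\{l,i_s\}$ with the fixed substring $I$. I would derive the formula by writing each longitude as a formal product in the free Magnus algebra, substituting the commutator perturbation, and then collecting the coefficient of each monomial $X_S$ for $S\in\mathcal{S}_j^{2k+1}$, being careful to separate the contributions coming from perturbing the $l$th longitude (which yield the $m_{li_s}$ terms, one for each position $s$ where $i_s\in\{J\}$) from those coming from perturbing the $j$th longitude itself (which yield the single $m_{lj}$ term).

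Finally, I would verify the reverse inclusion by constructing, for every symmetric integer matrix $(m_{pq})$, an explicit disk/band surface $F_c'$ obtained from $F_c$ by performing $m_{pq}$ units of the elementary move associated with the pair $(p,q)$; by the preceding computation this $F_c'$ realizes exactly the element on the right-hand side. Combined with the forward direction, this gives equality of the two sets. The symmetry $m_{pq}=m_{qp}$ ensures that the parametrization is neither overcounted nor undercounted, completing the identification of $H_c(2k+2,j)$ with the displayed affine subset of the non-commutative polynomial ring.
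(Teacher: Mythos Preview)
Your overall strategy—regarding $H_c(2k+2,j)$ as the orbit of $P(F_c)$ under changes of disk/band surface and computing the difference $P(F_c')-P(F_c)$ via the Magnus expansion—is exactly what the paper does. But the mechanism you describe for relating two disk/band surfaces is not the one available, and this creates a real gap.

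The paper does not decompose $F_c\to F_c'$ into a sequence of ``elementary band operations'' indexed by pairs $(p,q)$. Lemma~\ref{lemmaPB-move} says that any two disk/band surfaces of $c$ differ by full-twists on bands together with a single pure-braid move at the root; both are absorbed into one \emph{SL-move}, i.e.\ stacking a single string link $u$ (with doubled strands) beneath the bottom tangle. The integers $m_{pq}$ in the theorem are then precisely the pairwise linking numbers $\mu_u(pq)$ of the components of $u$: the symmetry $m_{pq}=m_{qp}$ is just symmetry of linking number, and the reverse inclusion holds because any symmetric family of integers is realized by the linking numbers of some string link. This is Proposition~\ref{prop2k+2} followed by a one-line observation; no iteration or closure-under-composition argument is needed.

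Two specific points in your sketch do not survive contact with the actual move. First, an SL-move does not ``modify exactly one string of the bottom tangle'': every meridian becomes $\alpha_i=u_i^{-1}\alpha_i'u_i$ and the $j$th longitude becomes $\lambda_j'=u_j\lambda_j u_j^{-1}$, so all components are perturbed simultaneously. Second, the four-term pattern $Y_{i_sIl}-Y_{i_slI}-Y_{Ili_s}+Y_{lIi_s}$ is not the expansion of a bare commutator $[x_l,x_{i_s}]$; it is $[Y_{i_s},\,Y_{Il}-Y_{lI}]$, where the inner two-term expression is the leading part of $E_Y(\beta_l)-1$ for $\beta_l=[\lambda_l',\alpha_l']$, and the outer bracket comes from expanding $E_Y(u_i^{-1}\alpha_i'u_i)$. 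The separate $m_{lj}$ block arises from the outer conjugation $u_j\lambda_j u_j^{-1}$, not from ``perturbing the $l$th longitude''. Without tracking the SL-move structure in this way the displayed formula cannot be derived as you outline.
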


\noindent
This theorem implies that  
the set $H_c(2k+2, j)$ is obtained from the Milnor numbers of $\gamma(F_c)$ 
for any single disk/band surface $F_c$ of $c$, that is, 
$H_c(2k+2, j)$ is specified explicitly. 

By the following corollary we have that $H_c(2k+2, j)$ is 
not only an invariant of $c$ but also an edge-homotopy invariant.

\begin{corollary}\label{EH-inv}
For a clover link $c$, if the Milnor numbers of $l_c$ for non-repeated sequences
 with length $\leq k$ vanish, then 
$H_c(2k+2, j)$ is an edge-homotopy invariant of $c$.
\end{corollary}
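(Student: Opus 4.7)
The plan is to deduce the corollary from Theorem~\ref{mainthm2k+2} by observing that every Milnor number appearing in the explicit description of $H_c(2k+2,j)$ is indexed by a non-repeated sequence, and hence is a link-homotopy invariant of the associated bottom tangle.

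First I would reduce to the case of a single crossing change on a spatial edge of $c$, since edge-homotopy is generated by such moves. Let $c'$ be obtained from $c$ by one such move, performed either on the $i$-th loop or on the $i$-th connecting edge. In both cases $l_{c'}$ is link-homotopic to (in fact, equal to, if the edge is a connecting edge) $l_c$, so the hypothesis on the vanishing of the Milnor numbers of $l_c$ for non-repeated sequences of length $\leq k$ persists for $l_{c'}$, and Theorem~\ref{mainthm2k+2} applies to $c'$ as well.

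Next I would construct, from a chosen disk/band surface $F_c$ of $c$, a compatible disk/band surface $F_{c'}$ of $c'$ such that $\gamma(F_c)$ and $\gamma(F_{c'})$ differ by a single self-crossing change on the $i$-th component. A crossing change on the $i$-th loop is realized by the same change on the loop portion of $F_c$, and a crossing change on the $i$-th connecting edge is realized on the corresponding band of $F_c$; under the construction $F\mapsto\gamma(F)$ of \cite{W}, both alterations descend to self-crossing changes on a single component of the resulting bottom tangle. Hence $\gamma(F_c)$ and $\gamma(F_{c'})$ are link-homotopic as bottom tangles.

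Finally, the Milnor numbers $\mu_{\gamma(F_c)}(Jj)$, $\mu_{\gamma(F_c)}(Il)$, and $\mu_{\gamma(F_c)}(Sj)$ appearing in the formula of Theorem~\ref{mainthm2k+2} are all indexed by non-repeated sequences, so they are link-homotopy invariants of $\gamma(F_c)$ by Milnor's classical result. Applying Theorem~\ref{mainthm2k+2} to the pair $(c,F_c)$ and to the pair $(c',F_{c'})$ therefore produces the same set, yielding $H_c(2k+2,j)=H_{c'}(2k+2,j)$. The main obstacle is the geometric compatibility step---verifying that a crossing change on a connecting edge of $c$ really does correspond to a self-crossing change on one component of $\gamma(F_c)$---but this reduces to a local check from the explicit description of $\gamma(F)$ in~\cite{W}.
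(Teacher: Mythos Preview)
Your proposal is correct and follows essentially the same route as the paper: obtain link-homotopic bottom tangles $\gamma(F_c)$ and $\gamma(F_{c'})$, invoke the link-homotopy invariance of Milnor numbers for non-repeated sequences, and then apply Theorem~\ref{mainthm2k+2}. The only difference is that the paper packages your ``geometric compatibility step'' into a citation of Lemma~\ref{lemmaW} (\cite[Lemma~4.1]{W}), which gives directly that edge-homotopic clover links admit disk/band surfaces with link-homotopic bottom tangles, whereas you reprove the forward direction of that lemma by hand via the single-crossing-change reduction.
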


It is the definition that the Milnor numbers of length $1$ are zero.
If $k=1$, then the theorem above holds without the condition.
The following example follows directly from Theorem~\ref{mainthm2k+2}.

\begin{example}
\label{ex4-clover}
For a $4$-clover link $c$ and a disk/band surface $F_c$ of $c$, we have 
\[\begin{array}{ll}
H_c(4, 4)=&
\Biggl\{\vspace{0.5em}\left(
\begin{array}{l}
\mu_{\gamma(F_c)}(14)\mu_{\gamma(F_c)}(23)(m_{13}-m_{34}-m_{12}+m_{24})\\
+\mu_{\gamma(F_c)}(12)\mu_{\gamma(F_c)}(34)(m_{24}-m_{23}-m_{14}+m_{13})
\end{array}\right)X_{123}\\
\hspace{1em}
\vspace{0.5em}&+\left(
\begin{array}{l}
\mu_{\gamma(F_c)}(14)\mu_{\gamma(F_c)}(23)(m_{34}-m_{13}-m_{24}+m_{12})\\
+\mu_{\gamma(F_c)}(13)\mu_{\gamma(F_c)}(24)(m_{34}-m_{23}-m_{14}+m_{12})
\end{array}\right)X_{132}\\
\hspace{1em}
\vspace{0.5em}&+\left(
\begin{array}{l}
\mu_{\gamma(F_c)}(13)\mu_{\gamma(F_c)}(24)(m_{23}-m_{34}-m_{12}+m_{14})\\
+\mu_{\gamma(F_c)}(12)\mu_{\gamma(F_c)}(34)(m_{23}-m_{24}-m_{13}+m_{14})
\end{array}\right)X_{213}
\\
\hspace{1em}
\vspace{0.5em}&+\left(
\begin{array}{l}
\mu_{\gamma(F_c)}(14)\mu_{\gamma(F_c)}(23)(m_{34}-m_{13}-m_{24}+m_{12})\\
+\mu_{\gamma(F_c)}(13)\mu_{\gamma(F_c)}(24)(m_{34}-m_{23}-m_{14}+m_{12})
\end{array}\right)X_{231}
\\
\hspace{1em}
\vspace{0.5em}&+\left(
\begin{array}{l}
\mu_{\gamma(F_c)}(13)\mu_{\gamma(F_c)}(24)(m_{23}-m_{34}-m_{12}+m_{14})\\
+\mu_{\gamma(F_c)}(12)\mu_{\gamma(F_c)}(34)(m_{23}-m_{24}-m_{13}+m_{14})
\end{array}\right)X_{312}
\\
\hspace{1em}
\vspace{0.5em}&+\left(
\begin{array}{l}
\mu_{\gamma(F_c)}(14)\mu_{\gamma(F_c)}(23)(m_{13}-m_{34}-m_{12}+m_{24})\\
+\mu_{\gamma(F_c)}(12)\mu_{\gamma(F_c)}(34)(m_{24}-m_{23}-m_{14}+m_{13})
\end{array}\right)X_{321}\\
\hspace{1em}
\vspace{0.5em}&+\displaystyle\sum_{S\in \mathcal{S}_4^3}\mu_{\gamma(F_c)}(S4)X_S~\Bigg|~m_{pq}\in{\mathbb Z}\Biggr\}.
\end{array}\]
\end{example}

This together with the theorem below gives us an edge-homotopy  classification of 
4-clover links, see Remark~\ref{rem: class}.
\begin{theorem}
\label{thm4-clover}
Let $c$ and $c'$ be $4$-clover links.
They are edge-homotopic  if and only if 
$H_c(4, 4)\cap H_{c'}(4, 4)\neq\emptyset$
and $\mu_c(I)=\mu_{c'}(I)$ for any non-repeated sequence $I$ with $|I|\leq 3$.
\end{theorem}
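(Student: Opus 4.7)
The plan is to prove the two directions separately. For the only-if direction, assume $c$ and $c'$ are edge-homotopic. Corollary~\ref{EH-inv} with $k=1$ yields $H_c(4,4)=H_{c'}(4,4)$, so the intersection is non-empty. For a non-repeated sequence $I$ with $|I|\leq 3$, the Milnor number $\mu_c(I)$ is a well-defined integer (Theorem~\ref{mainthmmodulo} with $k=1$ gives $\delta_c^1(I)=0$ in this range because the relevant subsequences have length $\leq 1$) and is an edge-homotopy invariant by~\cite{W}, so $\mu_c(I)=\mu_{c'}(I)$.

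For the if direction, the strategy is to reduce edge-homotopy of $4$-clover links to link-homotopy of their associated bottom tangles: if disk/band surfaces $F_c,F_{c'}$ can be chosen so that $\gamma(F_c)$ and $\gamma(F_{c'})$ are link-homotopic as $4$-component string links, then the link-homotopy caps off to produce an edge-homotopy from $c$ to $c'$. By the Habegger--Lin classification~\cite{HL}, two string links are link-homotopic if and only if all non-repeated Milnor numbers agree, so I aim to produce such a pair $F_c,F_{c'}$. Length-$\leq 2$ data is automatic (linking numbers) and length-$3$ data agrees by hypothesis, independently of the surface choice. From $H_c(4,4)\cap H_{c'}(4,4)\neq\emptyset$ together with the explicit description of these sets in Theorem~\ref{mainthm2k+2}, I extract $F_c,F_{c'}$ with $\mu_{\gamma(F_c)}(S4)=\mu_{\gamma(F_{c'})}(S4)$ for every $S\in\mathcal{S}_4^3$. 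For a non-repeated length-$4$ sequence not ending in $4$, I invoke the cyclic and shuffle relations on Milnor invariants of string links to express it as a $\mathbb{Z}$-linear combination of invariants ending in $4$ and of length-$\leq 3$ invariants, both already matching, so equality propagates to all length-$4$ non-repeated sequences; Habegger--Lin then delivers the required link-homotopy.

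The main obstacle is this final propagation: turning the set-theoretic equality $H_c(4,4)\cap H_{c'}(4,4)\neq\emptyset$ into a concrete choice of surfaces for which \emph{all} length-$4$ non-repeated Milnor numbers match, not only those ending in $4$. The cyclic and shuffle relations are the natural tool, but they must be shown to hold exactly at the string-link level and to be compatible with the parameterization of disk/band surface changes encoded in Theorem~\ref{mainthm2k+2}. Once this interplay is in hand, the remaining steps --- applying Habegger--Lin and capping off the string-link homotopy to a clover-link edge-homotopy --- are routine.
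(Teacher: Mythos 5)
Your only-if direction is essentially the paper's argument (Corollary~\ref{EH-inv} for $H_c(4,4)=H_{c'}(4,4)$, well-definedness of length~$\leq 3$ numbers, link-homotopy invariance of non-repeated Milnor numbers via Lemma~\ref{lemmaW}), and your reduction of the if direction to link-homotopy of bottom tangles is exactly Lemma~\ref{lemmaW}, so that frame is sound. The genuine gap is the step you yourself flag as the main obstacle: propagating equality from the length-$4$ invariants \emph{ending in} $4$ to \emph{all} length-$4$ non-repeated invariants by ``cyclic and shuffle relations at the string-link level.'' Shuffle relations for string links are exact (they come from group-likeness of the Magnus expansion of the longitude), but they never change the final index of a sequence: they only relate invariants $\mu(I j)$ with the same terminal $j$ to each other and to products of shorter invariants. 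Cyclic symmetry, which is what you would need to move the terminal index, does \emph{not} hold exactly for string-link Milnor numbers --- it is a relation for (closures of) links modulo the indeterminacy $\Delta$, and for string links it fails integrally except for first non-vanishing invariants, with correction terms you have not produced. So the claimed $\mathbb{Z}$-linear expression of, say, $\mu(1243)$ in terms of invariants ending in $4$ and length-$\leq 3$ data is not available as stated, and without it your appeal to Habegger--Lin in the form ``all non-repeated numbers agree'' does not go through.

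The paper avoids this entirely: it quotes the Habegger--Lin link-homotopy classification in the sharper form (see also \cite[Theorem~4.3]{Y}) that two $4$-component string links (bottom tangles) with common non-repeated Milnor numbers of length $\leq 3$ are link-homotopic if and only if their numbers for the two sequences $1234$ and $2134$ coincide. Since $H_c(4,4)\cap H_{c'}(4,4)\neq\emptyset$ already yields surfaces with $\mu_{\gamma(F_c)}(S4)=\mu_{\gamma(F_{c'})}(S4)$ for all $S\in\mathcal{S}_4^3$ --- in particular for $S=123$ and $S=213$ --- no propagation to other terminal indices is needed at all. You can repair your argument simply by invoking this version of the classification (only $(4-2)!=2$ length-$4$ invariants are independent once the lower ones are fixed), rather than trying to establish exact cyclic-type identities for string links.
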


\begin{remark}\label{rem: class}
By Example~{\rm \ref{ex4-clover}}, 
we are able to determine  whether
$H_c(4, 4)\cap H_{c'}(4, 4)$ is empty or not. 
Hence by combining Example~{\rm \ref{ex4-clover}} and Theorem~{\rm \ref{thm4-clover}},
we obtain an edge-homotopy classification of $4$-clover links.
\end{remark}

\section{The Milnor numbers of  clover links}
In this section we define the Milnor numbers for clover links.
\subsection{A construction of bottom tangles}

An $n$-component {\it tangle} is a properly embedded disjoint union of $n$ arcs in the $3$-cube $[0,1]^{3}$.
An $n$-component {\it bottom tangle} $\gamma=\gamma_{1}\cup \gamma_{2}\cup \cdots \cup \gamma_{n}$ defined by Levine~\cite{L} is a tangle with $\partial \gamma_{i}=\{ (\frac{2i-1}{2n+1},\frac{1}{2},0),(\frac{2i}{2n+1},\frac{1}{2},0) \}$ $\subset$ $\partial [0,1]^{3}$ 
 for each  $i\ (=1,2,\ldots,n)$.

A {\it spatial graph} is an embedded graph in $S^3$.
Let $C_n$ be a graph consisting of $n$ oriented loops $e_{1},e_{2},\ldots ,e_{n}$, each loop $e_{i}$ connected to a vertex $v$ by an edge $f_{i}\ (i=1,2,\ldots,n)$ , see Figure~\ref{Cn-graph}.
An {\it $n$-clover link} in $S^3$ is a spatial graph of $C_n$~\cite{L}. 
The each part of a clover link corresponding to $e_{i}$, $f_{i}$ and $v$ of $C_n$ are 
called the {\it leaf}, {\it stem} and {\it root}, denoted by the same notations respectively.

\begin{figure}[htpb]
 \begin{center}
\vspace{-3mm}
  \begin{overpic}[width=5cm]{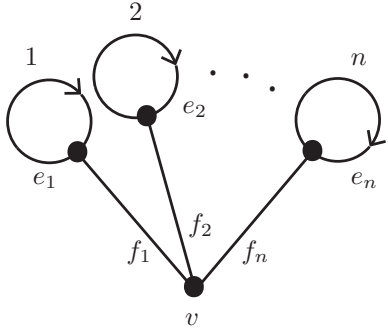}
     \put(7,108){1}
     \put(46,125){2}
     \put(129,108){$n$}
     \put(10,63){$e_{1}$}
     \put(66,90){$e_{2}$}
     \put(129,63){$e_{n}$}
     \put(45,35){$f_{1}$}
     \put(68,45){$f_{2}$}
     \put(88,35){$f_{n}$}
     \put(67,10){$v$}
  \end{overpic}
\vspace{-3mm}
  \caption{the graph $C_{n}$}
  \label{Cn-graph}
 \end{center}
\end{figure}

L. Kauffman, J. Simon, K. Wolcott and P. Zhao \cite{KSWZ} defined disk/band surfaces of spatial graphs.
For a spatial graph $\Gamma$, a {\it disk/band surface} $F_{\Gamma}$ of $\Gamma$ is a compact, oriented surface in $S^3$ such that $\Gamma$ is a deformation retract of $F_{\Gamma}$ contained in the interior of $F_{\Gamma}$. 
Note that any disk/band surface of a spatial graph is ambient isotopic to a surface constructed by putting a disk at each vertex of the spatial graph, connecting the disks with bands along the spatial edges.
We remark that for a spatial graph, there are infinitely many disk/band surfaces up to ambient isotopy.

Given an $n$-clover link, we construct an $n$-component bottom tangle using a disk/band surface of the clover link as follows:
\begin{itemize}
\item[(1)] For an $n$-clover link $c$, let $F_{c}$ be a disk/band surface of $c$ and let $D$ be a disk which contains the root.
From now on, we may assume that the intersection $D\cap \displaystyle\bigcup_{i=1}^{n}f_{i}$ and 
orientations of the disks are as illustrated in Figure~\ref{diskbandnotorikata}. 

\begin{figure}[htpb]
 \begin{center}
  \begin{overpic}[width=8cm]{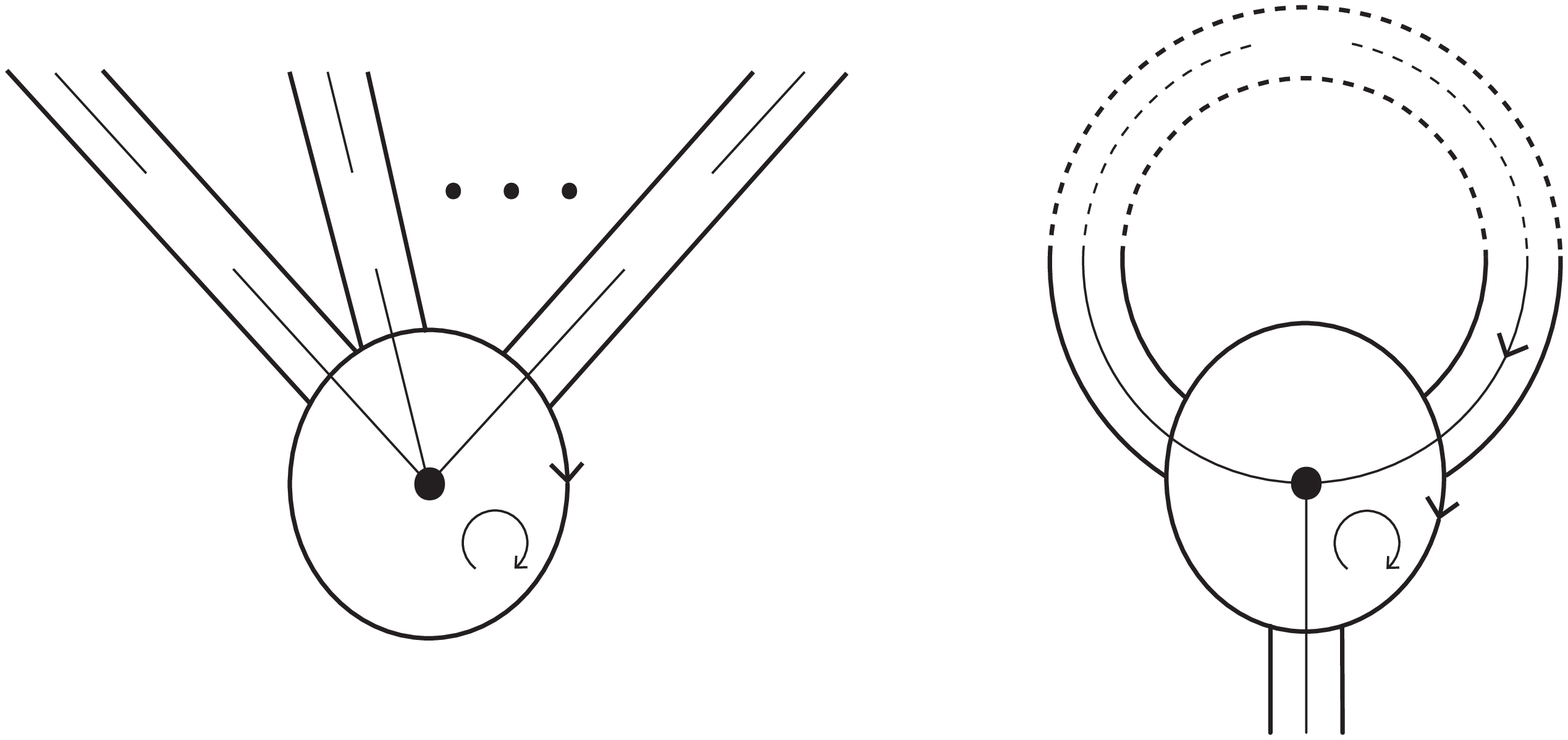}
     \put(23,73){$f_{1}$}
     \put(48,73){$f_{2}$}
     \put(92,73){$f_{n}$}
     \put(58,0){$D$}
     \put(187,99){$e_{i}$}
  \end{overpic}
  \caption{}
  \label{diskbandnotorikata}
 \end{center}
\end{figure}

\item[(2)] Let $N(D)$ be the regular neighborhood of $D$ and $\mathring{N(D)}$ the interior of $N(D)$.
Since $S^{3}\setminus \mathring{N(D)}$ is homeomorphic to the $3$-ball, $F_{c}\setminus \mathring{N(D)}$ can be seen as a disjoint union of surfaces in the $3$-ball.
Hence $\partial F_{c}\setminus \mathring{N(D)}$ is a disjoint union of $n$-arcs and $n$-circles $\displaystyle\bigcup_{i=1}^{n}S_{i}^{1}$ in the $3$-ball.
\item[(3)] Since the $3$-ball is homeomorphic to $[0,1]^{3}$, we obtain an oriented ordered $n$-component bottom tangle $\gamma(F_{c})$ from $(\partial F_{c}\setminus \mathring{N(D)}) \setminus \displaystyle\bigcup_{i=1}^{n}S_{i}^{1}$ as illustrated in (3) and (4) of Figure~\ref{bottomtanglenokosei}.
We call $\gamma(F_c)$ an {\it $n$-component  bottom tangle obtained from $F_{c}$}.
\end{itemize}

\begin{figure}[htbp]
  \begin{center}
  \vspace{-3mm}
  \begin{overpic}[width=10cm]{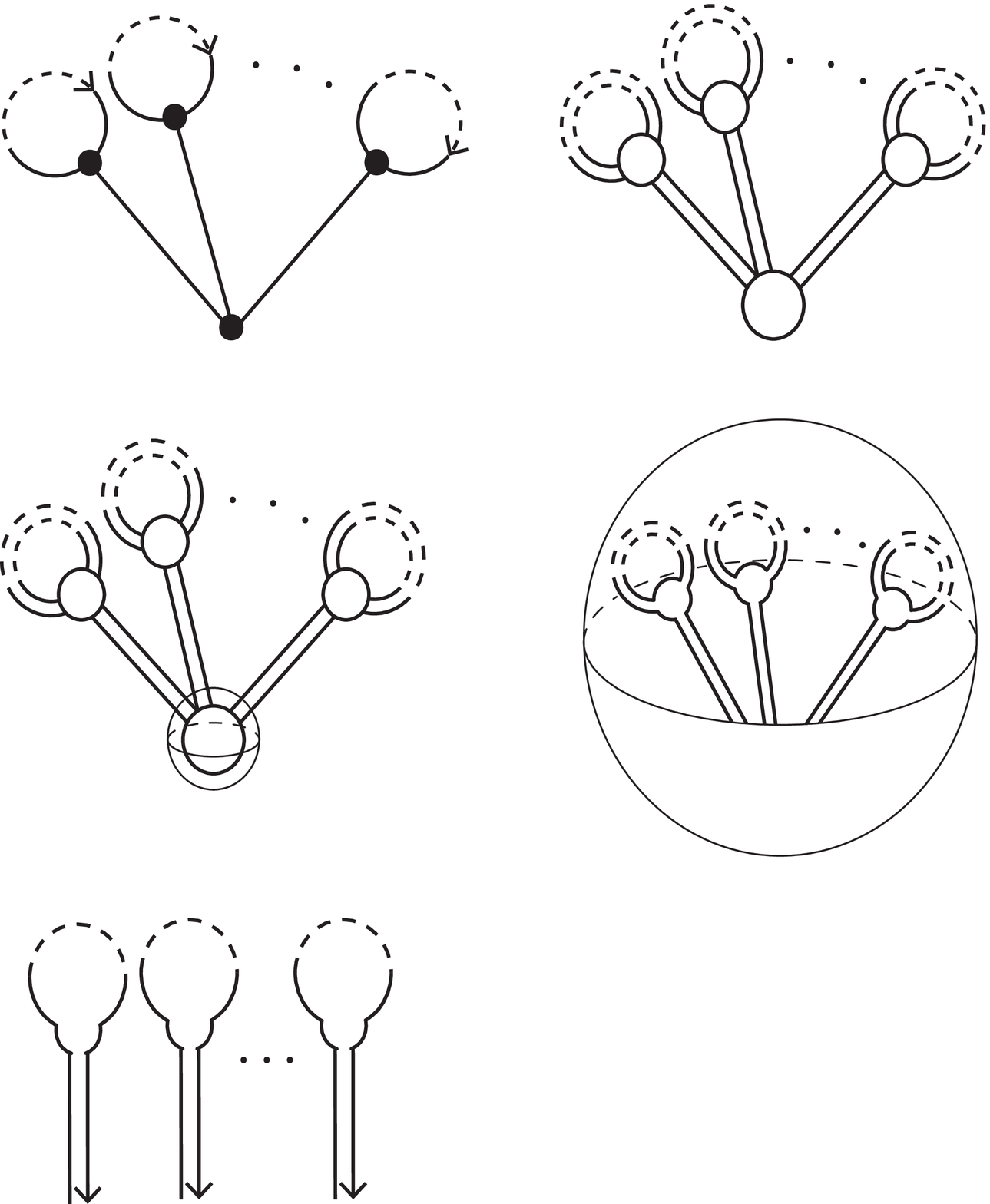}
   \put(1,325){1}
   \put(39,345){2}
   \put(124,328){$n$}
   \put(-25,285){$c$ :}
   \put(145,285){(1)}
   \put(235,253){$D$}
   \put(-25,155){(2)}
   \put(79,129){$N(D)$}
   \put(145,155){(3)}
   \put(200,85){$\partial F_{c}\setminus \mathring{N(D)}$}
   \put(-25,38){(4)}
   \put(20,85){1}
   \put(52,85){2}
   \put(96,85){$n$}
   \put(60,-5){$\gamma(F_c)$}
\end{overpic}
    \caption{A method for obtaining a bottom tangle from a disk-band surface of a clover link}
    \label{bottomtanglenokosei}
  \end{center}
\end{figure}

While there are infinitely many disk/band surfaces of a clover link
which satisfy the condition (1) above,
they are related by certain local moves as follows.

\begin{lemma}\cite[Proposition 2.5]{W}
\label{lemmaPB-move}
For an $n$-clover link $c$, any two disk/band surfaces $F_{c}$ and $F'_{c}$ are transformed into each other by adding full-twists to bands {\rm (}Figure~{\rm \ref{PB-move} (a))} and a single move illustrated in 
Figure~{\rm \ref{PB-move} (b)}.
\end{lemma}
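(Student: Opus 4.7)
The plan is to view a disk/band surface $F_c$ of $c$ as a thickening of the underlying spatial graph and then enumerate the discrete pieces of data that distinguish one thickening from another. Up to ambient isotopy rel $c$, any $F_c$ can be cut into (i) a small disk at the root and at each trivalent leaf--stem junction, and (ii) a thin rectangular band along each edge of $c$. With orientations prescribed as in Figure~\ref{diskbandnotorikata}, this reduces the classification of disk/band surfaces to discrete data: an integer framing on each band, and, at each leaf--stem junction, a local combinatorial pattern describing on which side of the stem the two ends of the leaf-band are attached.

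First I would verify that adding a full twist to a single band, as in Figure~\ref{PB-move}(a), changes its framing integer by $\pm 1$ and leaves every other datum untouched. Hence, given any two surfaces $F_c,F'_c$ with the \emph{same} attaching pattern at every leaf junction, finitely many twists along appropriate bands make their framing data coincide, after which a straightforward ambient isotopy (supported in tubular neighborhoods of the edges) converts $F_c$ into $F'_c$.

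Second, I would analyze move (b) and confirm that it exchanges the two possible attaching patterns at a single leaf--stem junction while leaving the attaching patterns at the other junctions and the root unchanged (the root's cyclic datum is already fixed by the normalization in Figure~\ref{diskbandnotorikata}). Iterating move (b) at each leaf in turn, one can match the attaching patterns of $F_c$ and $F'_c$ at every leaf. Combining with the previous paragraph yields the conclusion.

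The main obstacle I expect is the local verification in the second step: one must check that move (b) really does flip only the attaching pattern of one leaf, without disturbing framings or the local picture at the root. This requires a careful local model of the surface near a leaf--stem junction before and after the move, and an identification of the combinatorial data on both sides; once this local analysis is in place, the remaining ambient isotopy and twist counting are routine.
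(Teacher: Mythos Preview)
The paper does not give its own proof of this lemma: it is quoted from \cite[Proposition~2.5]{W} and used as a black box, so there is no in-paper argument to compare your proposal against.

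That said, your reading of move~(b) appears to disagree with how the paper presents and uses it. In Figure~\ref{PB-move}(b) the box is labelled ``pure braid'' and is drawn across several stem bands at once, and the lemma itself says the two surfaces are related by full twists \emph{and a single} application of~(b). More tellingly, whenever the lemma is invoked (in the proofs of Theorems~\ref{mainthmmodulo} and~\ref{mainthm2k+2}) the conclusion drawn is that $\gamma(F_c)$ and $\gamma(F'_c)$ differ by an SL-move, i.e.\ by stacking below $\gamma$ the doubled strands of an arbitrary $n$-component string link $u$. That conclusion only follows if move~(b) inserts an arbitrary pure braid among all $n$ stem bands near the root disk. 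Your second step, in which move~(b) ``exchanges the two possible attaching patterns at a single leaf--stem junction'' and is then iterated leaf by leaf, therefore misidentifies the move; and the discrete data you list in your first paragraph (framings plus a binary flag at each leaf) does not obviously account for this pure-braid freedom near the root. The obstacle you flag at the end is thus not merely a local verification but a substantive correction of what move~(b) actually does; once that is fixed the overall strategy (enumerate the data distinguishing thickenings, then match it move by move) may well survive, but the details of both steps would have to change.
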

\begin{figure}[htpb]
 \begin{center}
  \begin{overpic}[width=8cm]{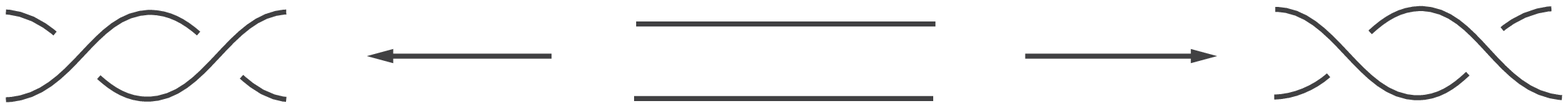}
     \put(-20,5){(a)}
     \put(103,3){band}
     \put(47,11){full-twist}
     \put(143,11){full-twist}
  \end{overpic}
 \end{center}
\end{figure}

\begin{figure}[htbp]
  \begin{center}
  \vspace{-5mm}
   \begin{overpic}[width=8cm]{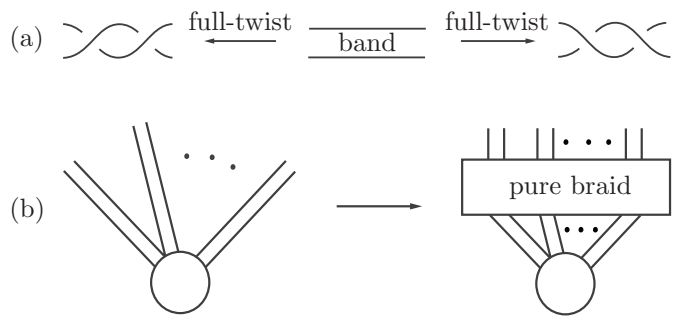}
    \put(-20,37){(b)}
    \put(167,46){pure braid}
    \end{overpic}
    \caption{Two local moves of disk/band surfaces}
    \label{PB-move}
  \end{center}
\end{figure}
\subsection{Milnor invariants}
Let us briefly recall from \cite{L} the definition of the Milnor number of a bottom tangle.
Let $\gamma=\gamma_{1}\cup\gamma_{2}\cup\cdots\cup\gamma_{n}$ be an oriented ordered $n$-component bottom tangle in $[0,1]^{3}$ 
with $\partial \gamma_{i}=\{ (\frac{2i-1}{2n+1},\frac{1}{2},0),(\frac{2i}{2n+1},\frac{1}{2},0) \}$ $\subset$ $\partial [0,1]^{3}$ for each  $i\ (=1,2,\ldots,n)$.
Let $G$ be the fundamental group of $[0,1]^{3}\setminus \gamma$ with a base point $p=(\frac{1}{2},0,0)$ and $G_{q}$ the $q$th lower central subgroup of $G$.
Let $\alpha_i$ and $\lambda_i$ be the $i$th meridian and $i$th longitude of $\gamma$ respectively as illustrated in Figures~\ref{meridian}.
We assume that $\lambda_{i}$ is trivial in $G/G_{2}$.
Since the quotient group $G/G_{q}$ is generated by $\alpha_{1},\alpha_{2},\ldots,\alpha_{n}$~\cite{Ch}, $\gamma$ is represented by $\alpha_{1},\alpha_{2},\ldots,\alpha_{n}$ modulo $G_{q}$.

\begin{figure}[htbp]
\begin{minipage}{0.45\hsize}
  \begin{center}
    \begin{overpic}[width=5cm]{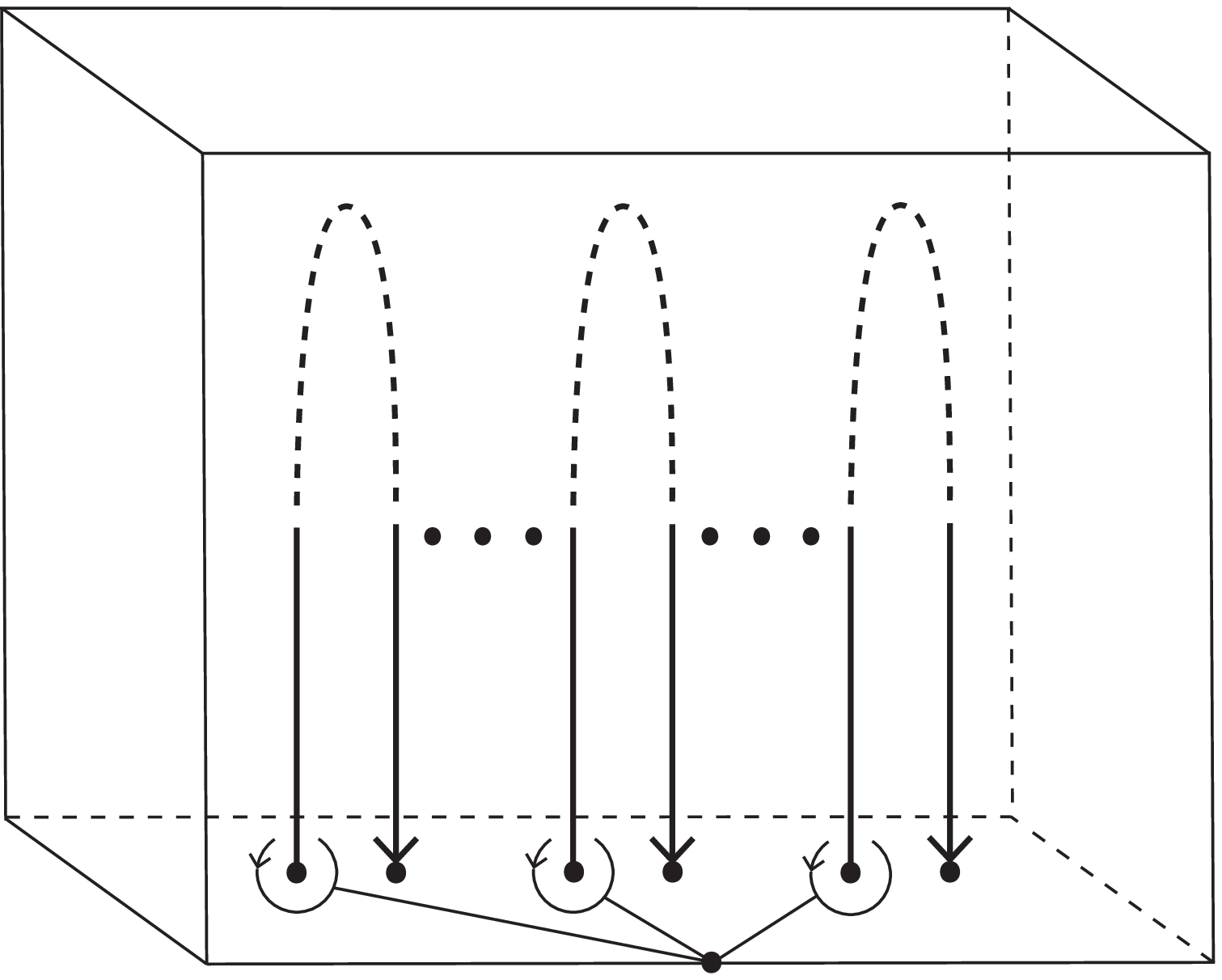}
       \put(26,10){$\alpha_{1}$}
       \put(52,18){$\alpha_{i}$}
       \put(102,10){$\alpha_{n}$}
       \put(81,0){$p$}
     \end{overpic}
  \end{center}
\end{minipage}
\begin{minipage}{0.45\hsize}
 \begin{center}
    \begin{overpic}[width=5cm]{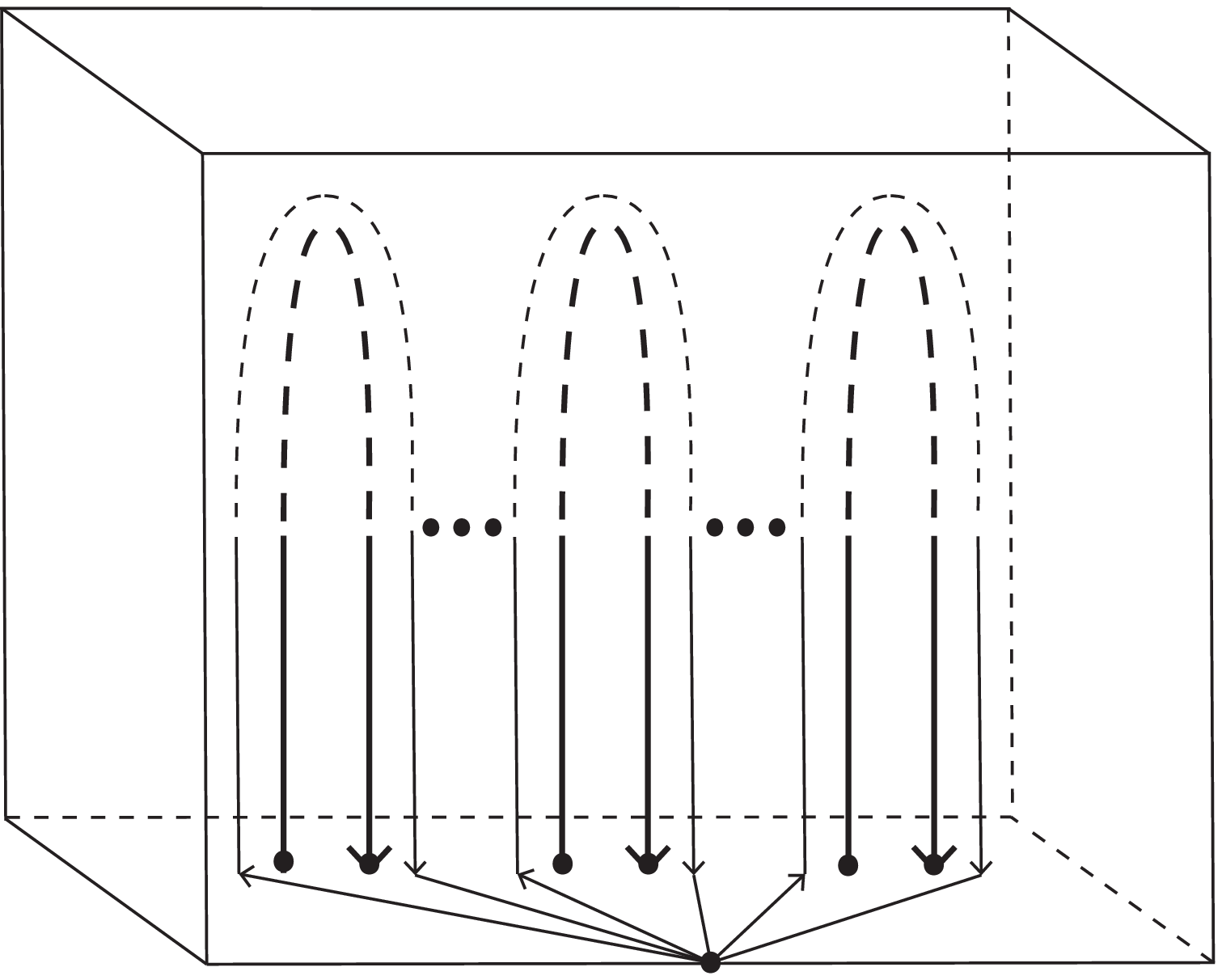}
       \put(25,10){$\lambda_{1}$}
       \put(51,28){$\lambda_{i}$}
       \put(112,10){$\lambda_{n}$}
       \put(81,0){$p$}
     \end{overpic}
  \end{center}
\end{minipage}
\caption{meridians and longitudes}
\label{meridian}
\end{figure}

We consider the {\it Magnus expansion} of $\lambda_{j}$.
The Magnus expansion is a homomorphism (denoted $E$) from a free group $\langle \alpha_{1}, \alpha_{2}, \ldots , \alpha_{n}\rangle$ to the formal power series ring in non-commutative variables $X_{1},X_{2},\ldots,X_{n}$ with integer coefficients defined as follows.
$E(\alpha_{i})=1+X_{i},\ E(\alpha_{i}^{-1})=1-X_{i}+X_{i}^{2}-X_{i}^{3}+\cdots \ (i=1,2,\ldots , n)$.

For a sequence $I=i_{1}i_{2}\ldots i_{k-1}j\ (i_{m}\in \{1,2,\ldots,n\}, k\leq q)$, we define the {\it Milnor number} $\mu_{\gamma}(I)$ to be the coefficient of 
$X_{i_{1}i_{2}\cdots i_{k-1}}$ in $E(\lambda_{j})$ (we define $\mu_{\gamma}(j)=0$),
which is an invariant \cite{L}.
(In \cite{L}, the set of $\lambda_{i}$'s, without taking the Magnus expansion, is called the {\it Milnor's $\omu$-invariant}.)
For a bottom tangle $\gamma=\gamma_{1}\cup\gamma_{2}\cup\cdots\cup\gamma_{n}$, an oriented link $L(\gamma)=L_{1}\cup L_{2}\cup\cdots\cup L_{n}$ in $S^{3}$ can be defined by $L_{i}=\gamma_{i}\cup a_{i}$, where $a_{i}$ is a line segment connecting $(\frac{2i-1}{2n+1},\frac{1}{2},0)$ and $(\frac{2i}{2n+1},\frac{1}{2},0)$, see Figure~\ref{closure}.
\begin{figure}[htbp]
  \begin{center}
  \begin{overpic}[width=5cm]{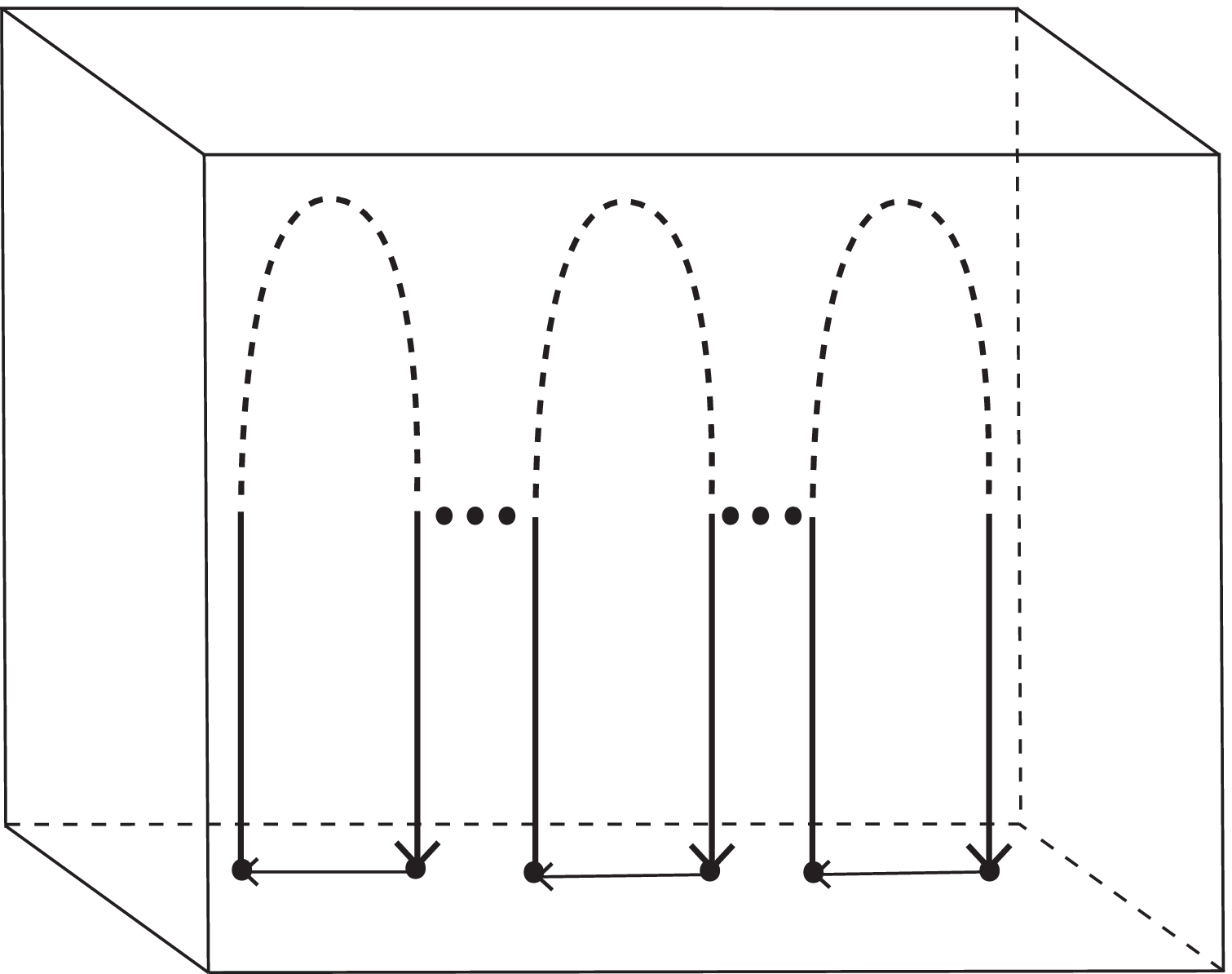}
   \put(35,5){$a_{1}$}
   \put(70,5){$a_{i}$}
   \put(100,5){$a_{n}$}
\end{overpic}
    \caption{}
    \label{closure}
  \end{center}
\end{figure}
We call $L(\gamma)$ the {\it closure} of $\gamma$.
On the other hand, for any oriented link $L$ in $S^{3}$, there is a bottom tangle $\gamma_{L}$ such that the closure of $\gamma_{L}$ is equal to $L$.
So we define the Milnor number of $L$ to be the Milnor number of $\gamma_{L}$.
Let $\Delta_{L}(I)$  be the greatest common devisor of $\mu_{L}(J)'s$, where $J$ is obtained from proper subsequence of $I$ by permuting cyclicly.
The {\it Milnor invariant $\omu_L(I)$} is the residue class of $\mu_L(I)$ modulo $\Delta_L(I)$.
We note that for a sequence $I$, if we have $\Delta_L(I)=0$, then the Milnor invariant $\omu_{L}(I)$ is equal to the Milnor number $\mu_{\gamma_{L}}(I)$.
Now we define the Milnor number of a clover link.

\begin{definition}
Let $c$ be an $n$-clover link and $F_{c}$ a disk/band surface of $c$.
Let $\gamma(F_{c})$ be the $n$-component bottom tangle obtained from $F_{c}$.
For a sequence $I$, {\it the Milnor number $\mu_{c}(I)$ of $c$} is defined to be the Milnor number $\mu_{\gamma(F_c)}(I)$.
\end{definition}

\begin{remark}\label{wada-inv}
While $\mu_{c}(I)$ depends on a choice of $F_{c}$, the first author~\cite{W} proved the following result:
Let $l_{c}$ be a link which is the disjoint union of leaves of $c$.
If the Milnor numbers of $l_c$ for sequences with length $\leq k$ vanish, then $\mu_{c}(I)$ is well-defined for any sequence $I$ with $|I|\leq 2k+1$.
\end{remark}

\section{Proof of Theorem~\ref{mainthmmodulo}}

In this section we will give a proof of Theorem~\ref{mainthmmodulo}.

An $n$-component tangle $u=u_{1}\cup u_{2}\cup \cdots \cup u_{n}$ is an $n$-component {\it string link} if for each $i\ (=1,2,\ldots,n)$, the boundary $\partial u_{i}=\{ (\frac{2i-1}{2n+1},\frac{1}{2},0), (\frac{2i-1}{2n+1},\frac{1}{2},1)\} \subset \partial [0,1]^{3}$.
In particular, $u$ is {\it trivial} if for each $i\ (=1,2,\ldots,n)$, $u_{i}=\{(\frac{2i-1}{2n+1},\frac{1}{2})\}\times[0,1]$ in~$[0,1]^{3}$.

Here we introduce a {\em SL-move} \cite{W}  given by a string link $u$ which is a transformation of an $n$-component bottom tangle $\gamma=\gamma_{1}\cup\gamma_{2}\cup\cdots\cup\gamma_{n}$ with $\partial \gamma_{i}=\{ (\frac{2i-1}{2n+1},\frac{1}{2},0),(\frac{2i}{2n+1},\frac{1}{2},0) \}\subset\partial[0,1]^3$.

\begin{itemize}
\item[(1)]
Let $u=u_{1}\cup u_{2}\cup\cdots\cup u_{n}$ be an oriented ordered $n$-component string link in $[0,1]^{3}$.
For each $i\ (=1,2,\ldots,n)$, we consider an arc $u'_i$ which is parallel to the $i$th component $u_{i}$ of $u$ with opposite orientation and $\partial u'_{i}=\{ (\frac{2i}{2n+1},\frac{1}{2},0), (\frac{2i}{2n+1},\frac{1}{2},1) \}\subset\partial[0,1]^3$, see Figure~\ref{parallelstrands}. 

\begin{figure}[htbp]
    \begin{overpic}[width=5cm]{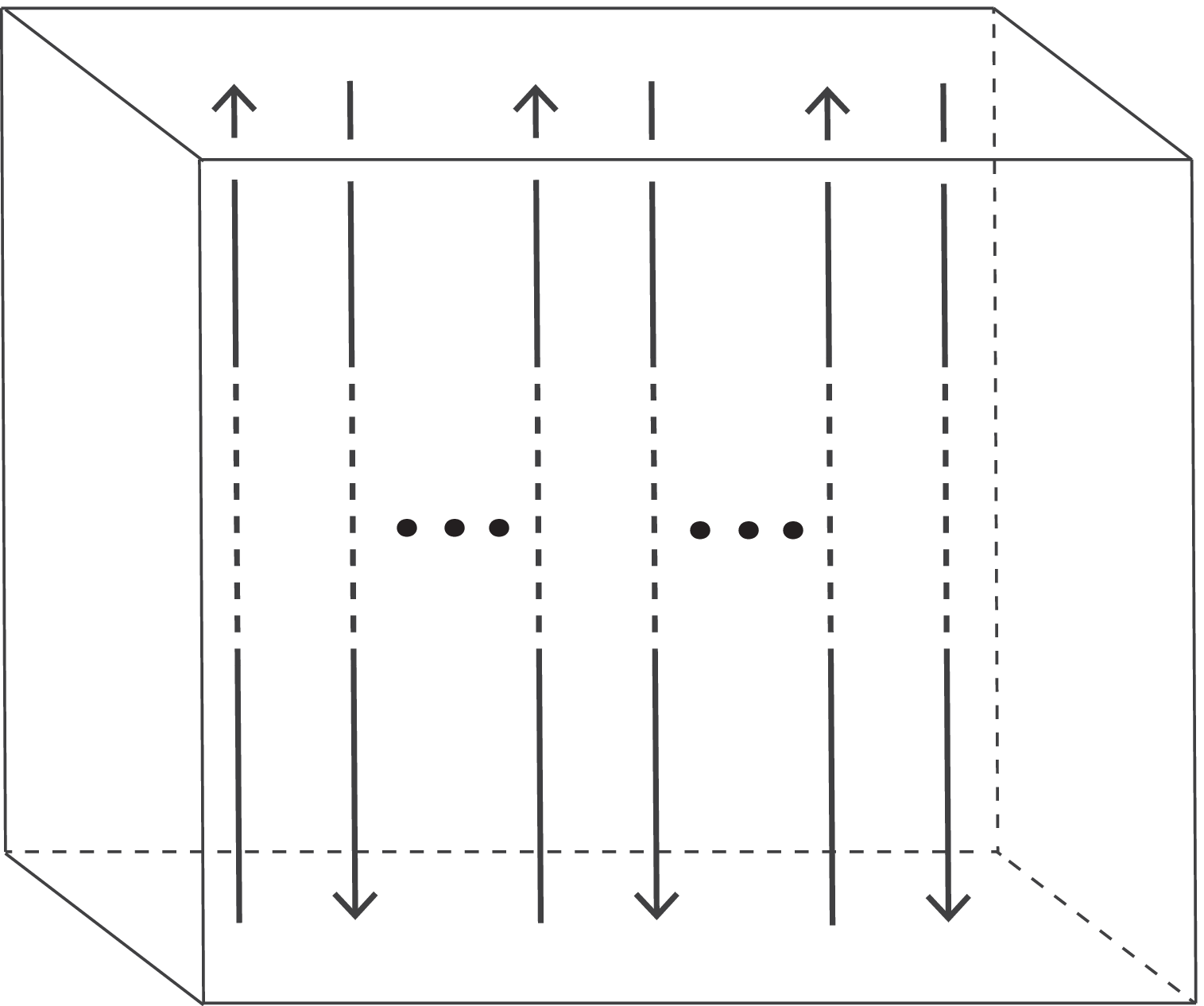}
    \put(23,111){$u_{1}$}
    \put(37,111){$u'_{1}$}
    \put(60,111){$u_{i}$}
    \put(74,111){$u'_{i}$}
    \put(93,111){$u_{n}$}
    \put(108,111){$u'_{n}$}
    \end{overpic}
    \caption{}
    \label{parallelstrands}
\end{figure}
\vspace{-0.5em}
\item[(2)] Let $\gamma'=\gamma'_{1}\cup\gamma'_{2}\cup\cdots\cup\gamma'_{n}$ be an $n$-component bottom tangle in $[0,1]^{3}$ defined by 
\begin{center}
$\gamma'_{i}=h_{0}(u_{i}\cup u'_{i})\cup h_{1}(\gamma_{i})$
\end{center}
for $i=1,2,\ldots,n$, where $h_{0},h_{1}:([0,1]\times[0,1])\times[0,1]\rightarrow ([0,1]\times[0,1])\times[0,1]$ are embeddings defined by 
\begin{center}
$h_{0}(x,t)=(x,\frac{1}{2}t)$ and $h_{1}(x,t)=(x,\frac{1}{2}+\frac{1}{2}t)$
\end{center}
for $x\in([0,1]\times[0,1])$ and $t\in[0,1]$.
\end{itemize}
\vspace{-0.5em}
\begin{figure}[htbp]
  \begin{center}
    \begin{overpic}[width=12cm]{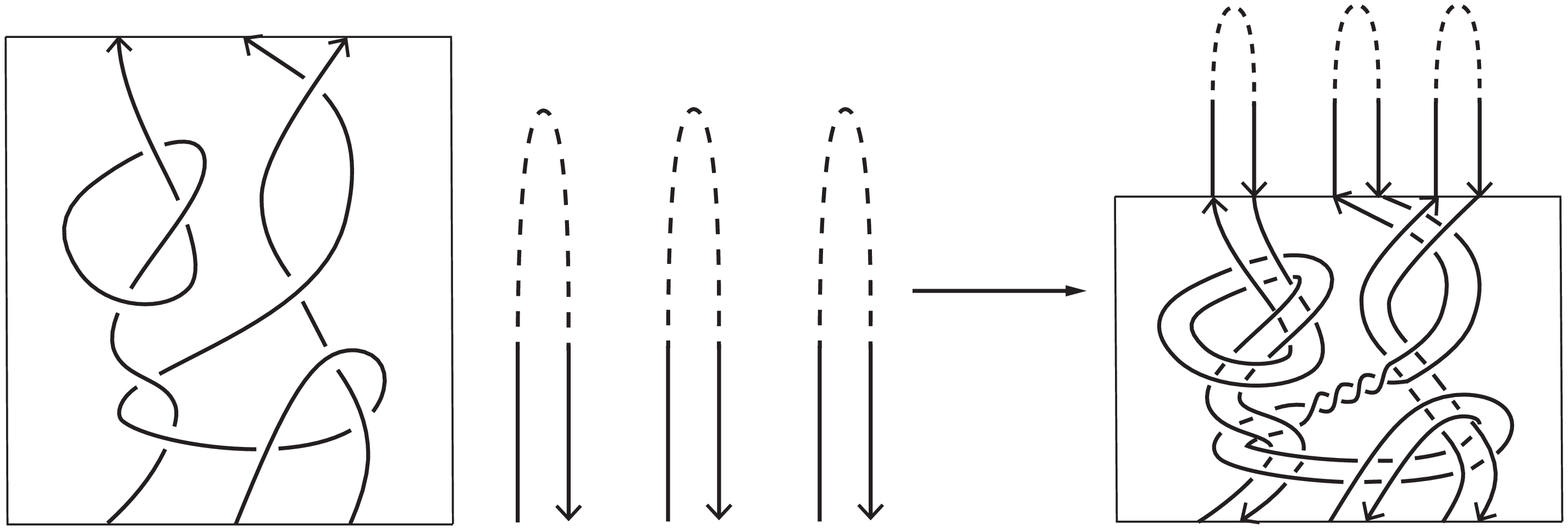}
    \put(45,-10){$u$}
    \put(290,-10){$\gamma'$}
    \put(149,-10){$\gamma$}
    \put(198,56){SL-move}
    \end{overpic}
    \caption{An example of a SL-move}
    \label{SL-move}
  \end{center}
\end{figure}

We say that {\it $\gamma'$ is obtained from $\gamma$ by a SL-move}. 
For example, see Figure~\ref{SL-move}. 
We note that if $u$ is trivial, a SL-move is just adding full-twists or nothing.
A SL-move is determined by a string link and a number of full-twists, 
that is, \lq SL' stands for String Link.

\begin{proposition}
\label{propmodulo}
Let $\gamma$ be an $n$-component bottom tangle and $\gamma'$ a bottom tangle obtained from $\gamma$ by a SL-move.
If the Milnor numbers of $\gamma$ and $\gamma'$ for sequences with length $\leq k$ vanish,
then for any sequence $I$, 
\[\mu_{\gamma'}(I)\equiv\mu_{\gamma}(I)~\mod~\delta_{\gamma}^k(I),\]
where $\delta^k_{\gamma}(I)$ is the greatest common devisor of $\mu_{\gamma}(J)'s$ for 
a proper subsequence $J$ of $I$ which is obtained by removing at least $k+1$ indices.
\end{proposition}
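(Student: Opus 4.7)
My plan is to prove Proposition \ref{propmodulo} by a direct analysis of the Magnus expansions of the longitudes of $\gamma$ and $\gamma'$, exploiting the geometric description of the SL-move as stacking the doubled string link $u\cup u'$ on top of $\gamma$.

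First, I would set up the longitude formula. Write the free group generated by the meridians $\alpha_1,\ldots,\alpha_n$ of $\gamma'$ as $F$; identify these meridians (taken near the bottom) with the meridians of $\gamma$ on the one hand, and with the meridians at the bottom of $u$ on the other. Let $\lambda^u_j$ denote the $j$th longitude of the string link $u$, regarded as a word in $F$. Since $\gamma'_j$ is constructed by traversing $u_j$, then $\gamma_j$ (translated upward), then $u'_j$, a standard van Kampen / Wirtinger argument yields a product formula of the form
\[
\lambda'_j \;=\; w_j \,\lambda^u_j\, \lambda_j\, w_j^{-1} \pmod{\text{deeper commutators}},
\]
where $w_j$ is a word in $F$ coming from the path through $u$ to the base of $\gamma_j$. (The precise form, including the deeper commutator correction, will need to be pinned down by tracking longitudes across the level $z=\tfrac{1}{2}$ interface using the fact that $u'_j$ is the reverse parallel of $u_j$.)

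Next, I would apply the Magnus expansion $E$. The hypothesis that Milnor numbers of length $\leq k$ vanish for both $\gamma$ and $\gamma'$ translates via the standard correspondence into $\lambda_j,\lambda'_j\in F_k$, so $E(\lambda_j)-1$ and $E(\lambda'_j)-1$ contain no terms of degree less than $k$. Feeding this into the formula above forces the contribution of $\lambda^u_j$ (modulo the contribution of $\gamma$) to have Magnus expansion also starting in degree $\geq k$. Expanding the product
\[
E(\lambda'_j) \;=\; E(w_j)\,E(\lambda^u_j)\,E(\lambda_j)\,E(w_j)^{-1} \;+\; (\text{higher commutator corrections})
\]
and collecting the coefficient of $X_I$ for a fixed sequence $I$ of length $m\geq k+1$, I would write
\[
\mu_{\gamma'}(Ij)-\mu_\gamma(Ij) \;=\; \sum_{I=I_1I_2I_3} c(I_1,I_3)\cdot \mu_\gamma(I_2 j) \;+\; (\text{terms bilinear in }\mu_\gamma,\ \mu^u),
\]
where $c(I_1,I_3)$ is an integer obtained from coefficients of $E(w_j)$ and $E(\lambda^u_j)$, and each cross-term contributed from the higher commutator correction similarly contains a factor $\mu_\gamma(J)$ for a proper subsequence $J$ of $I$.

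The heart of the argument is to verify that every summand on the right has a factor $\mu_\gamma(J)$ with $|J|\leq m-k-1$. This follows because any factor in the product that comes from $E(\lambda^u_j)$ contributes a sequence of length $\geq k$ (by the forced vanishing above), so the remaining factor from $E(\lambda_j)$ indexes a Milnor number of $\gamma$ on a subsequence shorter by at least $k+1$. Assembling these divisibility statements shows $\mu_{\gamma'}(I)-\mu_\gamma(I)$ is a $\mathbb{Z}$-linear combination of such $\mu_\gamma(J)$, hence divisible by $\delta^k_\gamma(I)$.

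The main obstacle is the bookkeeping in the last step: one needs to make the longitude formula precise enough that every correction term arising from deeper commutators (which in principle could contribute arbitrary words in $F$) still factors as (string-link data) times (a Milnor number $\mu_\gamma(J)$ with $|J|\le m-k-1$). I would handle this by induction on $m=|I|$, using the case $|I|\le k$ (where both $\mu_\gamma(I)$ and $\mu_{\gamma'}(I)$ vanish by hypothesis) as the base, and at each inductive step using the already-established formula at shorter lengths to rewrite the higher-commutator corrections in the desired divisible form.
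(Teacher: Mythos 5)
Your overall strategy --- compare the Magnus expansions of the longitudes of $\gamma$ and $\gamma'$ and show that every difference term carries a Milnor number of $\gamma$ indexed by a much shorter subsequence --- is the same in spirit as the paper's, but the crucial quantitative input is missing, and without it the argument does not close. The paper's proof rests on the exact conjugation relations $\alpha_i=u_i^{-1}\alpha'_iu_i$ and $\lambda'_j=u_j\lambda_ju_j^{-1}$ in $\pi_1([0,1]^3\setminus\gamma')$, together with the fact (Lemma~2.6 of~\cite{W}) that every term of $E(u_i^{\pm1})-1$ has degree at least $k+1$. That bound is not a formal consequence of the vanishing hypotheses: it uses the specific geometry of the SL-move, namely that in the complement of $\gamma'$ the string-link strands occur as oppositely oriented parallel pairs, so each $u_i$ is a word in the commutators $\beta_l=[\lambda'_l,\alpha'_l]$, and each $\beta_l$ expands as $1+(\text{degree}\geq k+1)$ because $E(\lambda'_l)-1$ starts in degree $k$. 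Your proposal never identifies or proves anything of this kind; the only leverage you extract is a degree comparison showing $E(\lambda^u_j)-1$ starts in degree $\geq k$, which is one short of what you need. You then assert that the surviving factor is a $\mu_\gamma(J)$ with at least $k+1$ indices removed --- an unjustified off-by-one, and it is exactly the point at issue, since a subsequence missing only $k$ indices is not among those defining $\delta^k_\gamma(I)$.

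The gap is worse for the conjugating words: you have no control over $E(w_j)$ at all (nor over the words conjugating the meridians --- note that the meridians of $\gamma$ and of $\gamma'$ are not equal in the relevant group but conjugate by the $u_i$, a substitution your setup glosses over and which produces a large part of the difference, the paper's $H_j-F_j$). If $E(w_j)$ had nonzero coefficients in low degree, conjugation alone would create terms of the form (coefficient of $w_j$)$\,\times\,\mu_\gamma(Bj)$ with $B$ obtained from $I$ by deleting as few as one index, which need not be divisible by $\delta^k_\gamma(I)$; so by your own bookkeeping the congruence would fail unless all string-link contributions are shown to start in degree $\geq k+1$. Your proposed fallback --- induction on $|I|$ to absorb the ``deeper commutator corrections'' of a product formula $\lambda'_j=w_j\lambda^u_j\lambda_jw_j^{-1}$ --- cannot supply this, because the problematic terms occur already in the lowest degree being tested; moreover the formula itself misrepresents the SL-move, which yields pure conjugation $\lambda'_j=u_j\lambda_ju_j^{-1}$ (the reverse parallel $u'_j$ cancels the longitude contribution of $u_j$), with no separate factor and no correction terms to handle. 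The paper then does the bookkeeping cleanly via the ideal $D^k_j$ and the two Milnor-style insertion claims, rather than via an inductive repair of correction terms.
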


\begin{proof}
The proof is by induction on the length $|I|=q$. 
If $q\leq k$, the proposition clearly holds. We note that, by the  induction hypothesis, 
$\delta^k_{\gamma}(I)=\delta^k_{\gamma'}(I)$. 
Denote respectively by $\alpha_i, \lambda_i$ (resp. $\alpha_i', \lambda_i'$) the $i$th meridian and $i$th longitude of $\gamma$ (resp. $\gamma'$) for $1\leq i\leq n$.
Let $E_X$ (resp. $E_Y$) be the Magnus expansion in non-commutative variables $X_1,\ldots ,X_n$ (resp. $Y_1,\ldots ,Y_n$) obtained by replacing $\alpha_i$ by $1+X_i$ (resp. $\alpha_i'$ by $1+Y_i$) for $1\leq i \leq n$.
Fix $j$,
by the assumption,
the Milnor numbers for $\gamma$ and $\gamma'$ of length $\leq k$ vanish,
so $E_X(\lambda_j)$ and $E_Y(\lambda'_j)$ can be written respectively in the form
\[E_X(\lambda_j)=1+F_j(X)~\text{and}~E_Y(\lambda'_j)=1+F'_j(Y),\]
where 
$F_j(X)(=F_j(X_1,...,X_n))$ and $F'_j(Y)(=F'_j(Y_1,...,Y_n))$  are terms 
of degree~$\geq~k$.

Here we define a set of polynomials;
\[D_j^k=\Bigl\{\sum\nu(i_1\ldots i_m)Y_{i_1}\cdots Y_{i_m}~\left|
\begin{array}{ll}
\nu(i_1\ldots i_m)\equiv0\mod\delta_{\gamma}^k(i_1\ldots i_mj),&m<q\\
\nu(i_1\ldots i_m)\in\mathbb{Z},&m\geq q
\end{array}
\right.
\Bigr\}.\]
Then it is enough to show
\[F'_j(Y)-F_j(Y)\in D_j^k.\]
The following claims are shown by similar to the assertions (16) and (18) in~\cite{M2}.
\begin{claim}
\label{claim1}
$D_j^k$ is a two-sided ideal of the formal power series ring in non-commutative variables 
$Y_1,\ldots,Y_n$ with integer coefficients.
\end{claim}
\begin{claim}
\label{claim2}
If at least $k$ variables are inserted anywhere in a term
$\mu(i_1i_2\ldots i_mj)Y_{i_1i_2\cdots i_m}$,
then the resulting term belongs to $D_j^k$.
\end{claim}

\begin{figure}[htbp]
  \begin{center}
    \begin{overpic}[width=10cm]{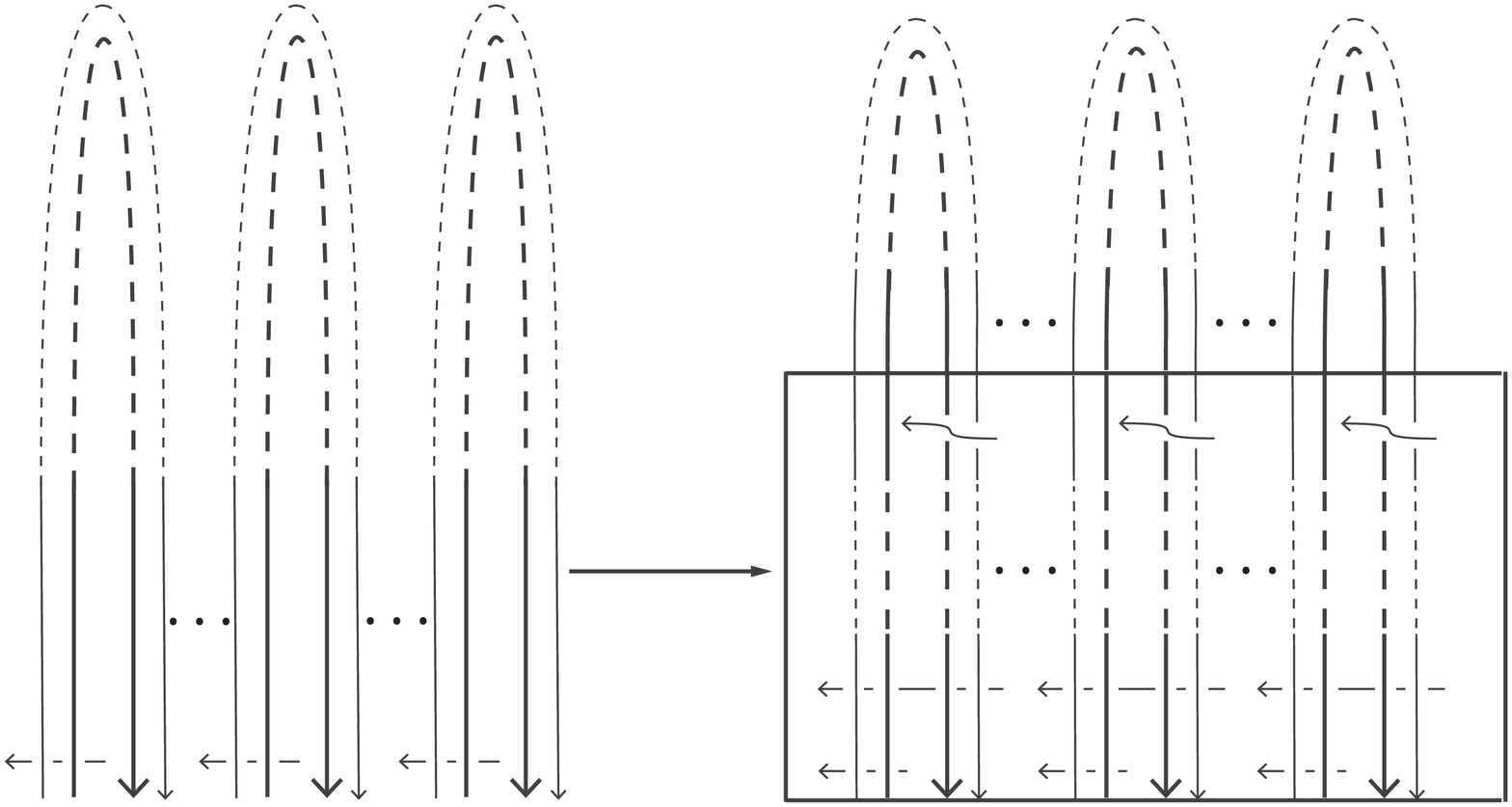}
    \linethickness{3pt}
    \put(53,-10){$\gamma$}
    \put(211,-10){$\gamma'$}
    \put(107,47){SL-move}
    \put(14,158){$\lambda_{1}$}
    \put(51,158){$\lambda_{i}$}
    \put(87,158){$\lambda_{n}$}
    \put(167,158){$\lambda'_{1}$}
    \put(210,158){$\lambda'_{i}$}
    \put(250,158){$\lambda'_{n}$}
    \put(-5,11){$\alpha_{1}$}
    \put(33,11){$\alpha_{i}$}
    \put(69,11){$\alpha_{n}$}
    \put(150,11){$\alpha'_{1}$}
    \put(192,11){$\alpha'_{i}$}
    \put(232,11){$\alpha'_{n}$}
    \put(190,66){$u_{1}$}
    \put(231,66){$u_{i}$}
    \put(272,66){$u_{n}$}
    \put(150,28){$\beta_{1}$}
    \put(192,28){$\beta_{i}$}
    \put(232,28){$\beta_{n}$}
    \end{overpic}
    \caption{}
    \label{bandbottomtangle}
  \end{center}
\end{figure}
Let $u_{i}$ be the $i$th longitude of a string link which gives the SL-move, see Figure~\ref{bandbottomtangle}.
In the proof of~\cite[Lemma~2.6]{W}, it is shown that 
the degree of each term in 
$E_Y(u_i^{\pm1})-1$ is at least $k+1$.
Set $E_Y(u_i)=1+G_i(Y)$ and $E_Y(u_i^{-1})=1+\overline{G_i}(Y)$, where $G_i(Y)$ and $\overline{G_i}(Y)$ mean the terms of degree $\geq k+1$.
Since $\alpha_i=u_i^{-1}\alpha'_iu_i$ (where $\alpha_i$ is assumed to be an element of $\pi_1([0,1]^3\setminus\gamma')$), 
we have
\[\begin{array}{rcl}
E_Y(\alpha_i)&=&E_Y(u_i^{-1}\alpha'_iu_i)\\
&=&(1+\overline{G_i}(Y))(1+Y_i)(1+G_i(Y))\\
&=&(1+\overline{G_i}(Y))(1+G_i(Y))+(1+\overline{G_i}(Y))Y_i(1+G_i(Y))\\
&=&1+Y_i+Y_iG_i(Y)+\overline{G_i}(Y)Y_i+\overline{G_i}(Y)Y_iG_i(Y).
\end{array}\]
Hence $E_Y(\lambda_j)$ is obtained from $E_X(\lambda_j)$ by substituting
$X_i$ for
\[Y_i+Y_iG_i(Y)+\overline{G_i}(Y)Y_i+\overline{G_i}(Y)Y_iG_i(Y).\]
Set $E_Y(\lambda_j)=1+H_j(Y)$, where $H_j(Y)$ is the terms of degree $\geq k$.
Note that terms of degree $\leq 2k$ of $H_j(Y)-F_j(Y)$ vanish, and that 
any term of $H_j(Y)-F_j(Y)$ of degree $\geq 2k+1$ is obtained from $F_j(Y)$ by inserting at least $k+1$ variables.
By Claim~\ref{claim2}, 
\[H_j(Y)-F_j(Y)\in D_j^k.\]

Since $\lambda'_j=u_j\lambda_ju_j^{-1}$ (where $\lambda_j$ is assumed to be an element of $\pi_1([0,1]^3\setminus\gamma')$),
\[\begin{array}{rcl}
E_Y(\lambda'_j)&=&E_Y(u_j\lambda_ju_j^{-1})\\
&=&(1+G_j(Y))(1+H_j(Y))(1+\overline{G_j}(Y))\\
&=&1+H_j(Y)+H_j(Y)\overline{G_j}(Y)+G_j(Y)H_j(Y)+G_j(Y)H_j(Y)\overline{G_j}(Y).
\end{array}\]
It follows from Claims~\ref{claim1} and \ref{claim2}  that we have 
\[F'_j(Y)-H_j(Y)=H_j(Y)\overline{G_j}(Y)+G_j(Y)H_j(Y)+G_j(Y)H_j(Y)\overline{G_j}(Y)\in D_j^k.\]
Since $H_j(Y)-F_j(Y)\in D_j^k$, by Claim~\ref{claim1}, we have
\[F'_j(Y)-F_j(Y)\in D_j^k.\]
This completes the proof.
\end{proof}

\begin{proof}[Proof of Theorem~{\rm \ref{mainthmmodulo}}]
By Lemma ~\ref{lemmaPB-move}, any two disk/band surfaces $F_{c}$ and $F'_{c}$ of an $n$-component clover link $c$ are transformed into each other by the moves (a) and (b) illustrated in Figure~\ref{PB-move}.
So two bottom tangles $\gamma(F_c)$ and $\gamma(F'_c)$ are transformed into each other by a SL-move.
Since the both closures $L(\gamma(F_{c}))$ and $L(\gamma(F'_{c}))$ are ambient isotopic to $l_{c}$, 
by the hypothesis of Theorem~\ref{mainthmmodulo}, 
\[0=\omu_{l_{c}}(J)=\mu_{\gamma(F_{c})}(J)=\mu_{\gamma(F'_{c})}(J)\]
 for any sequence $J$ with $|J|\leq k.$
Hence by Proposition~\ref{propmodulo}, 
\[\mu_{\gamma(F'_{c})}(I)\equiv \mu_{\gamma(F_{c})}(I)~\mod~\delta_{\gamma({F_c})}^k(I)\]
 for any sequence $I$.
This completes the proof.
\end{proof}

\section{
Proof of Theorems~\ref{mainthm2k+2} and \ref{thm4-clover}}

In this section we  will give proofs of Theorems~\ref{mainthm2k+2}, \ref{thm4-clover} and 
Corollary~\ref{EH-inv}. 

\begin{proposition}
\label{prop2k+2}
Let $\gamma$ be an $n$-component bottom tangle and $\gamma'$ a bottom tangle obtained from $\gamma$ by a SL-move which is given by a string link $u$.
If the Milnor numbers of $\gamma$ and $\gamma'$ for non-repeated sequences with length $\leq k$  vanish, then 
we have the following: 

\[\begin{array}{rcl}
&&\displaystyle\sum_{S\in \mathcal{S}_j^{2k+1}}(\mu_{\gamma'}(Sj)-\mu_{\gamma}(Sj))Y_S\\
&=&\displaystyle\sum_{\substack{|J|=|I|=k\\JIl\in \mathcal{S}_j^{2k+1}}}\mu_{\gamma}(Jj)\mu_{\gamma}(Il)
\sum_{i_s\in\{J\}}\mu_u(li_s)
Y_{J_{<s}}(
Y_{i_sIl}
-Y_{i_slI}
-Y_{Ili_s}
+Y_{lIi_s}
)Y_{J_{s<}}\\
&&~~~+\displaystyle\sum_{\substack{|J|=|I|=k\\JIl\in \mathcal{S}_j^{2k+1}}}\mu_{\gamma}(Jj)\mu_{\gamma}(Il)\mu_u(lj)(Y_{IlJ}-Y_{lIJ}-Y_{JIl}+Y_{JlI}).
\end{array}\]
\end{proposition}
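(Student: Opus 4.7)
My plan is to extract the non-repeated, degree-$(2k+1)$ part of $F_j'(Y) - F_j(Y)$ following the strategy of Proposition~\ref{propmodulo}. Adopting the same notation, I use $E_Y(\lambda_j') = (1 + G_j)(1 + H_j)(1 + \overline{G_j})$ with $H_j = F_j(\sigma(X))$ and $\sigma(X_i) = (1 + \overline{G_i})Y_i(1 + G_i)$; after simplification using $(1+G_j)(1+\overline{G_j}) = 1$,
\[F_j'(Y) - F_j(Y) = \bigl(G_j H_j + H_j \overline{G_j} + G_j H_j \overline{G_j}\bigr) + \bigl(H_j - F_j\bigr)(Y).\]
The task is to read off the non-repeated degree-$(2k+1)$ coefficient of each summand and match it to Types A and B of the claimed formula. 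Under the hypothesis, the non-repeated part of $F_j(X)$ starts at degree $k$ with coefficient $\mu_\gamma(Ij)$ on $X_I$, the degree-$1$ non-repeated coefficient of $G_i(Y)$ is the linking number $\mu_u(li)$, and $\overline{G_i}^{(1)} = -G_i^{(1)}$. By a non-repeated analogue of Proposition~\ref{propmodulo}, I would first show $\mu_{\gamma'}(I) = \mu_\gamma(I)$ for non-repeated $I$ with $|I| \leq 2k+1$, so the non-repeated part of $F_j' - F_j$ vanishes in degrees $\leq 2k$.

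For the conjugation part, the leading linking-number contributions come from the splittings $G_j^{(1)} H_j^{(2k)}$ and $H_j^{(2k)} \overline{G_j}^{(1)}$; combined via $\overline{G_j}^{(1)} = -G_j^{(1)}$, these yield commutator expressions in which the linking number $\mu_u(lj)$ multiplies the non-repeated degree-$2k$ part of $H_j$. Once that degree-$2k$ part is expressed as a sum $\sum \mu_\gamma(Jj)\mu_\gamma(Il) Y_{JIl}$ (plus related monomials), the Type-B expressions appear. For the substitution part, I use the Leibniz expansion
\[\sigma(X_I) - X_I = \sum_{s=1}^{|I|} \sigma(X_{i_1})\cdots\sigma(X_{i_{s-1}})\bigl(\sigma(X_{i_s}) - Y_{i_s}\bigr)\sigma(X_{i_{s+1}})\cdots\sigma(X_{i_{|I|}}),\]
restricted to contributions where $\sigma(X_{i_s}) - Y_{i_s}$ is taken at its degree-$2$ leading part $\sum_l \mu_u(li_s)[Y_{i_s}, Y_l]$. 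Coupled with the leading coefficient $\mu_\gamma(Jj)$ of $F_j$ and the same identification of the non-repeated degree-$2k$ part of $H_j$, these produce the Type-A expressions, with the flanking factors $Y_{J_{<s}}$ and $Y_{J_{s<}}$ coming from the remaining $\sigma$-factors evaluated at their leading $Y_{i_r}$.

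The principal obstacle is twofold. First, I need to identify the non-repeated degree-$2k$ part of $H_j$ as a product of two leading-order Milnor numbers: I expect this to emerge from a Massey-product-type analysis of $H_j$ under the vanishing of non-repeated Milnor numbers of length $\leq k$. Second, I need to verify that contributions from $\mu_u$ of length $\geq 3$ (via $G_i^{(d)}$ with $d \geq 2$) or from $\mu_\gamma$ of length $>k+1$ (via $F_j^{(m)}$ with $m>k$, outside the Massey identification) do not contribute at non-repeated degree $2k+1$; a careful combinatorial index count should show that each such contribution either forces a repeated-index monomial (ruled out) or an accompanying length-$(\leq k)$ non-repeated Milnor number (vanishing by hypothesis). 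The bookkeeping to match the signs and positions of the four-term $Y$-monomial combinations $Y_{i_sIl} - Y_{i_slI} - Y_{Ili_s} + Y_{lIi_s}$ and $Y_{IlJ} - Y_{lIJ} - Y_{JIl} + Y_{JlI}$ against the signs from the Leibniz expansion and the identity $\overline{G_i}^{(1)} = -G_i^{(1)}$ is the final combinatorial task.
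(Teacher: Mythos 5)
There is a genuine gap, and it sits exactly where the linking numbers $\mu_u(li)$ enter your computation. You assume that $G_i(Y)=E_Y(u_i)-1$ has degree-one part $\sum_{l\neq i}\mu_u(li)Y_l$, so that $\sigma(X_{i_s})-Y_{i_s}$ has a degree-two leading part $\sum_l\mu_u(li_s)[Y_{i_s},Y_l]$. This is false for an SL-move: the string link is inserted as a $0$-framed double (each $u_l$ is accompanied by the reversed parallel $u'_l$ inside the same component $\gamma'_l$), so the linear part of $E_Y(u_i)$ vanishes and, modulo repeated terms, $E_Y(u_i)-1$ begins only in degree $k+1$ (this is precisely the point of \cite[Lemma~2.6]{W} quoted in the proof of Proposition~\ref{propmodulo}). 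The paper's proof hinges on the observation your plan is missing: $u_i$ is a word in the elements $\beta_l=[\lambda'_l,\alpha'_l]$, the meridians of the doubled bands, with $E_Z(u_i)=1+\sum_{l\neq i}\mu_u(li)Z_l+\mathcal{O}(2)$ in the variables $Z_l$ dual to the $\beta_l$; substituting $E_Y(\beta_l)=1+\sum_{I\in\mathcal{S}_l^k}\mu_{\gamma}(Il)(Y_{Il}-Y_{lI})+\mathcal{O}(k+2)+\mathcal{O}_Y(2)$ gives $G_i(Y)=g_i(Y)+\cdots$ with $g_i(Y)=\sum_{l\neq i}\mu_u(li)\sum_{I\in\mathcal{S}_l^k}\mu_{\gamma}(Il)(Y_{Il}-Y_{lI})$ of degree $k+1$. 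It is this expression that supplies the second factor $\mu_{\gamma}(Il)$ sitting next to $\mu_u(li_s)$ in the Type-A terms (substitution of the corrected meridians into $f_j$) and next to $\mu_u(lj)$ in the Type-B terms (the commutator $g_jf_j-f_jg_j$ coming from $\lambda'_j=u_j\lambda_ju_j^{-1}$).

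Your two substitutes for this cannot work. First, a linear $G_i^{(1)}$ would make the meridian substitution perturb the longitude already in non-repeated degree $k+1$, contradicting the invariance of the length-$(\le 2k+1)$ Milnor numbers under SL-moves (Remark~\ref{wada-inv}, \cite{W}) --- the very vanishing in degrees $\le 2k$ that you invoke. Second, the proposed ``Massey-product-type'' identification of the non-repeated degree-$2k$ part of $H_j$ as $\sum\mu_{\gamma}(Jj)\mu_{\gamma}(Il)Y_{JIl}$ (plus related monomials) is not available: those coefficients are, up to the substitution correction, the genuine length-$(2k+1)$ Milnor numbers $\mu_{\gamma}(Sj)$ of $\gamma$, which are independent invariants and not products of lower-order ones; in the paper's argument the degree-$2k$ part of $H_j$ is never factored, and the product structure $\mu_{\gamma}(Jj)\mu_{\gamma}(Il)\mu_u(\cdot)$ arises solely from $f_j$ together with $g$. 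Your overall decomposition $F'_j-F_j=(G_jH_j+H_j\overline{G_j}+G_jH_j\overline{G_j})+(H_j-F_j)$ is indeed the paper's starting point, but the leading-order input you feed into it is wrong, so the deferred sign and position bookkeeping would not close the argument.
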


\begin{proof}
We compare with the Magnus expansions of the $j$th longitudes of $\gamma$ and $\gamma'$.
Denote respectively by $\alpha_i, \lambda_i$ (resp. $\alpha_i', \lambda_i'$) the $i$th meridian and $i$th longitude of $\gamma$ (resp. $\gamma'$) for $1\leq i\leq n$.
Let $E_X$ (resp. $E_Y$) be the Magnus expansion in non-commutative variables $X_1,\ldots ,X_n$ (resp. $Y_1,\ldots ,Y_n$) obtained by replacing $\alpha_i$ by $1+X_i$ (resp. $\alpha_i'$ by $1+Y_i$) for $1\leq i \leq n$.
By the assumption,
the Minor numbers for $\gamma$ and $\gamma'$ of degree $k$ coincide,
hence denote by
\[E_X(\lambda_j)=1+\displaystyle\sum_{I\in \mathcal{S}_j^k}
{\mu_{\gamma}(Ij)}X_I+r_j(X)+\mathcal{O}_X(2)\]
and
\[E_Y(\lambda'_j)=1+\displaystyle\sum_{I\in \mathcal{S}_j^k}{\mu_{\gamma}(Ij)}Y_I+r'_j(Y)+\mathcal{O}_Y(2),\]
where $r_j(X)$ and $r'_j(Y)$ mean the terms of degree $\geq k+1$
and $\mathcal{O}_X(2)$ (resp. $\mathcal{O}_Y(2)$) denotes the terms which contain $X_i$ (resp. $Y_i$) at least $2$ times for some $i(=1,2,\ldots,n)$.
Let $f_j(X)=\displaystyle\sum_{I\in \mathcal{S}_j^k}{\mu_{\gamma}(Ij)}X_I$ and $f_j(Y)=\displaystyle\sum_{I\in \mathcal{S}_j^k}{\mu_{\gamma}(Ij)}Y_I$.

Let $u_{i}$ be the $i$th longitude of $u$
and let $\beta_{i}=[ \lambda_i', \alpha_i']={\lambda'_i}^{-1}{\alpha'_i}^{-1}\lambda'_i\alpha'_i$, see Figure~\ref{bandbottomtangle}.
Let $E_Z$ be the Magnus expansion in non-commutative variables $Z_1,\ldots ,Z_n$ obtained by replacing $\beta_i$ by $1+Z_i$ for $1\leq i \leq n$.
Then we have
\[E_Z(u_i)=1+\displaystyle\sum_{l\neq i}\mu_u(li)Z_l+\mathcal{O}(2).\]

First, we observe $E_Y(\beta_l)$ and $E_Y(\alpha_i)$, where $\alpha_{i}$ is assumed to be an element of $\pi_{1}([0,1]^{3}\setminus \gamma')$.
Since $E_Y(\lambda'_l)E_Y({\lambda'_l}^{-1})=1$, set
$E_Y({\lambda'_l}^{-1})=1-f_l(Y)+\overline{r'_l}(Y)+\mathcal{O}_Y(2)$,
where $\overline{r'_l}(Y)$ is the terms of degree $\geq k+1$.
Observe that
\[\begin{array}{rcl}
&&E_Y(\beta_l)\\
&=&E_Y({\lambda'_l}^{-1}{\alpha'_l}^{-1}\lambda'_l\alpha'_l)\\
&=&(1-f_l(Y)+\overline{r'_l}(Y)+\mathcal{O}_Y(2))(1-Y_l+\mathcal{O}_Y(2))(1+f_l(Y)+r'_l(Y)+\mathcal{O}_Y(2))(1+Y_l)\\
&=&1+f_l(Y)Y_l-Y_lf_l(Y)+\mathcal{O}(k+2)+\mathcal{O}_Y(2)\\
&=&1+\displaystyle\sum_{I\in \mathcal{S}_l^k}\mu_{\gamma}(Il)Y_IY_l
-Y_l\displaystyle\sum_{I\in \mathcal{S}_l^k}\mu_{\gamma}(Il)Y_I
+\mathcal{O}(k+2)+\mathcal{O}_Y(2)\\
&=&1+\displaystyle\sum_{I\in \mathcal{S}_l^k}\mu_{\gamma}(Il)(Y_{Il}-Y_{lI})+\mathcal{O}(k+2)+\mathcal{O}_Y(2).
\end{array}\]
This implies that
$E_Y(u_i)$ is obtained from $E_Z(u_i)$ by substituting $Z_l$ for
\[\displaystyle\sum_{I\in \mathcal{S}_l^k}\mu_{\gamma}(Il)(Y_{Il}-Y_{lI})+\mathcal{O}(k+2)+\mathcal{O}_Y(2).\]
So we have
\[E_Y(u_i)=1+\displaystyle\sum_{l\neq i}\mu_u(li)
\displaystyle\sum_{I\in \mathcal{S}_l^k}\mu_{\gamma}(Il)(Y_{Il}-Y_{lI})
+\mathcal{O}(k+2)+\mathcal{O}_Y(2).
\]
Let $g_i(Y)=\displaystyle\sum_{l\neq i}\mu_u(li)
\displaystyle\sum_{I\in \mathcal{S}_l^k}\mu_{\gamma}(Il)(Y_{Il}-Y_{lI})$.
Then we have 
\[E_Y(u_i^{-1})=1-g_i(Y)+\mathcal{O}(k+2)+\mathcal{O}_Y(2).\]
Since $\alpha_i=u_i^{-1}\alpha_i'u_i$,
we have
\[\begin{array}{l}
E_Y(\alpha_i)\\
=E_Y(u_i^{-1}\alpha'_iu_i)\\
=(1-g_i(Y)+\mathcal{O}(k+2)+\mathcal{O}_Y(2))(1+Y_i)(1+g_i(Y)+\mathcal{O}(k+2)+\mathcal{O}_Y(2))\\
=1+Y_i+Y_ig_i(Y)-g_i(Y)Y_i+\mathcal{O}(k+3)+\mathcal{O}_Y(2)\\
=1+Y_i+\displaystyle\sum_{l\neq i}\mu_u(li)
\displaystyle\sum_{I\in \mathcal{S}_l^k}\mu_{\gamma}(Il)(Y_{iIl}-Y_{ilI}-Y_{Ili}+Y_{lIi})+\mathcal{O}(k+3)+\mathcal{O}_Y(2).
\end{array}\]

Now we consider the difference $d_j(Y)=E_Y(\lambda_j)-(1+f_j(Y)+r_j(Y)+\mathcal{O}_Y(2))$.
Since $E_Y(\lambda_j)$ is obtained from $E_X(\lambda_j)(=1+f_j(X)+r_j(X)+\mathcal{O}_X(2))$
by substituting $X_{i}$ for 
\[Y_{i}+\displaystyle\sum_{l\neq i}\mu_u(li)\displaystyle\sum_{I\in \mathcal{S}_l^k}\mu_{\gamma}(Il)(Y_{iIl}-Y_{ilI}-Y_{Ili}+Y_{lIi})+\mathcal{O}(k+3)+\mathcal{O}_Y(2),\]
all terms of degree $\leq 2k$ of $d_j(Y)-\mathcal{O}_Y(2)$ vanish.
The terms of degree $2k+1$ in $d_j(Y)-\mathcal{O}_Y(2)$ is obtained from $f_j(Y)$ by substituting $Y_{i}$ for 
\[\displaystyle\sum_{l\neq i}\mu_u(li)
\displaystyle\sum_{I\in S_l^k}\mu_{\gamma}(Il)(Y_{iIl}-Y_{ilI}-Y_{Ili}+Y_{lIi})\]
for some $i\in\{1,2,\ldots,n\}$.
It follows that
\[\begin{array}{l}
d_j(Y)-(\mathcal{O}_Y(2)+\mathcal{O}(2k+2)+\mathcal{O}_{Y_j})\\
=\displaystyle\sum_{J\in \mathcal{S}_j^k}\mu_{\gamma}(Jj)
\displaystyle\sum_{i_s\in \{J\}}Y_{J_{<s}}\Bigl(\sum_{l\neq i_s}\mu_u(li_s)
\displaystyle\sum_{JIl\in \mathcal{S}_j^{2k+1}}\mu_{\gamma}(Il)(Y_{i_sIl}-Y_{i_slI}-Y_{Ili_s}+Y_{lIi_s})\Bigr)Y_{J_{s<}}\\

=\displaystyle
\sum_{\substack{|J|=|I|=k\\JIl\in \mathcal{S}_j^{2k+1}}}\mu_{\gamma}(Jj)\mu_{\gamma}(Il)
\displaystyle\sum_{i_s\in\{J\}}\mu_u(li_s)
Y_{J_{<s}}(Y_{i_sIl}-Y_{i_slI}-Y_{Ili_s}+Y_{lIi_s})Y_{J_{s<}},\\
\end{array}\]
where $\mathcal{O}_{Y_j}$ means the terms which contain $Y_j$ at least one time.

Finally, we observe the difference $E_Y(\lambda'_j)-(1+f_j(Y)+r_j(Y)+\mathcal{O}_Y(2))$.
Since $\lambda'_j=u_j\lambda_ju_j^{-1}$ (where $\lambda_{j}$ is assumed to be an element of $\pi_{1}([0,1]^{3}\setminus \gamma'$)),
we have
\[\begin{array}{l}
E_Y(\lambda'_j)\\
=E_Y(u_j\lambda_ju_j^{-1})\\
=1+(1+g_j(Y))(f_j(Y)+r_j(Y)+d_j(Y))(1-g_j(Y))
+\mathcal{O}(2k+2)+\mathcal{O}_Y(2)\\
=1+f_j(Y)+r_j(Y)+d_j(Y)+g_j(Y)f_j(Y)-f_j(Y)g_j(Y)
+\mathcal{O}(2k+2)+\mathcal{O}_Y(2).
\end{array}\]
So we have
\[\begin{array}{rcl}
&&E_Y(\lambda'_j)-(1+f_j(Y)+r_j(Y)+\mathcal{O}_Y(2))\\
&=&d_j(Y)
+\displaystyle\sum_{\substack{|J|=|I|=k\\JIl\in \mathcal{S}_j^{2k+1}}}
\mu_{\gamma}(Jj)\mu_{\gamma}(Il)\mu_u(lj)
(Y_{IlJ}-Y_{lIJ}-Y_{JIl}+Y_{JlI})\\
&&+\mathcal{O}(2k+2)+\mathcal{O}_Y(2)\\

&=&\displaystyle\sum_{\substack{|J|=|I|=k\\JIl\in \mathcal{S}_j^{2k+1}}}\mu_{\gamma}(Jj)\mu_{\gamma}(Il)
\displaystyle\sum_{i_s\in\{J\}}\mu_u(li_s)
Y_{J_{<s}}(
Y_{i_sIl}
-Y_{i_slI}
-Y_{Ili_s}
+Y_{lIi_s}
)Y_{J_{s<}}\\
&&+\displaystyle\sum_{\substack{|J|=|I|=k\\JIl\in \mathcal{S}_j^{2k+1}}}\mu_{\gamma}(Jj)\mu_{\gamma}(Il)\mu_u(lj)(Y_{IlJ}-Y_{lIJ}-Y_{JIl}+Y_{JlI})\\
&&+\mathcal{O}(2k+2)+\mathcal{O}_Y(2)+\mathcal{O}_{Y_j}.
\end{array}\]
This completes the proof.
\end{proof}
\begin{proof}[Proof of Theorem~{\rm \ref{mainthm2k+2}}]
By Lemma ~\ref{lemmaPB-move}, any two disk/band surfaces $F_{c}$ and $F'_{c}$ of an $n$-component clover link $c$ are transformed into each other by the moves (a) and (b) in Figure~\ref{PB-move}.
So two bottom tangles $\gamma(F_c)$ and $\gamma(F'_c)$ are transformed into each other by a SL-move.
Since the both closures $L(\gamma(F_{c}))$ and $L(\gamma(F'_{c}))$ are ambient isotopic to $l_{c}$ and the hypothesis of Theorem~\ref{mainthm2k+2}, 
\[0=\omu_{l_{c}}(J)=\mu_{\gamma(F_{c})}(J)=\mu_{\gamma(F'_{c})}(J)\]
 for any sequence $J$ with $|J|\leq k.$ 
Since $\mu_u(pq)$ is the \lq linking number' of the $p$th component and the $q$th component of $u$, 
$\mu_u(pq)=\mu_u(qp)$ and
the set 
\[\{\mu_u(pq)~|~ u:\text{a string link}\}={\mathbb Z}\]
for any $p$ and $q$. 
This and Proposition~\ref{prop2k+2} give us the Thorem~\ref{mainthm2k+2}.
\end{proof}

In order to prove Corollary~\ref{EH-inv} and Theorem~\ref{thm4-clover},
we need the following lemma given in~\cite{W}.
\begin{lemma}\cite[Lemma 4.1]{W}
\label{lemmaW}
Two $n$-clover links $c$ and $c'$ are edge-homotopic 
if and only if
there exist disk/band surfaces $F_c$ and $F_{c'}$ of $c$ and $c'$ respectively
such that the two bottom tangles $\gamma(F_c)$ and $\gamma(F_{c'})$ are link-homotopic.
\end{lemma}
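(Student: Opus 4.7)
The lemma has two directions, and both hinge on understanding which arcs of the bottom tangle $\gamma(F_c)$ correspond to which spatial edges of the clover link $c$. The key structural observation is that the $i$th component $\gamma_i$ of $\gamma(F_c)$ is assembled from the boundary of the portion of the disk/band surface $F_c$ along the $i$th stem $f_i$ and the $i$th leaf $e_i$ together with the disks at the root $v$ and at the leaf-vertex $v_i$; no arc of $\gamma_i$ lies on a band corresponding to any other stem or leaf. This component-to-branch correspondence is the bridge between self-crossing changes of $\gamma(F_c)$ and crossing changes on single spatial edges of~$c$.

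For the ``only if'' direction, I would start with an arbitrary disk/band surface $F_c$ of $c$ and realize the edge-homotopy $c \rightsquigarrow c'$ as a finite sequence of ambient isotopies of spatial graphs and crossing changes on single spatial edges. Each crossing change on $e_i$ or $f_i$ can be thickened to a crossing change within the band of $F_c$ along that edge. Since both the $e_i$-band and the $f_i$-band contribute arcs only to $\gamma_i$, the thickened move is a self-crossing change of the $i$th component, hence a link-homotopy move. Processing the sequence move by move yields a disk/band surface $F_{c'}$ of $c'$ together with a link-homotopy $\gamma(F_c) \rightsquigarrow \gamma(F_{c'})$.

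For the ``if'' direction, suppose $\gamma(F_c)$ and $\gamma(F_{c'})$ are joined by a sequence of self-crossing changes and ambient isotopies. I would project each self-crossing change of $\gamma_i$ back to the clover link: the two crossing arcs both lie on the portion of $F_c$ spanned by the $i$th branch $f_i \cup e_i$. When both arcs lie on the same band (either $f_i$ or $e_i$), the projected move is precisely a crossing change on that spatial edge, an edge-homotopy move. The subtle case is a ``mixed'' crossing, where one arc lies on the $f_i$-band and the other on the $e_i$-band. Because $f_i$ and $e_i$ are adjacent at $v_i$, I would invoke Lemma~\ref{lemmaPB-move} to modify $F_c$ locally near $v_i$, shifting the crossing so that both arcs end up on the same band; this reduces the mixed case to the easy case without altering the underlying clover link. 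The ambient isotopies of the bottom tangle correspond (again up to Lemma~\ref{lemmaPB-move}) to ambient isotopies of $c$, so the whole sequence assembles into an edge-homotopy from $c$ to $c'$.

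The main obstacle will be giving a clean local argument for the mixed-crossing case in the ``if'' direction: one must check that every self-crossing change of $\gamma_i$ whose two arcs come from different bands of the $i$th branch can be absorbed by a composition of disk/band moves from Lemma~\ref{lemmaPB-move} together with a single crossing change on one of $f_i$ or $e_i$. Once this local reduction is established, the correspondence between self-crossing changes of $\gamma(F_c)$ and edge-homotopy moves of $c$ is tight, and the lemma follows in both directions.
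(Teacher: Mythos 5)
The paper does not prove this lemma; it imports it verbatim from \cite[Lemma 4.1]{W}, so there is no in-paper argument to compare yours against and I can only judge your sketch on its own merits. Your ``only if'' direction is essentially sound: a crossing change on a single spatial edge of the $i$th branch, thickened to a crossing change of the two sheets of the band, alters only crossings among boundary arcs belonging to $\gamma_i\cup S_i^1$, and since $S_i^1$ is discarded this is a composition of self-crossing changes of $\gamma_i$ (note: up to four of them, not one --- harmless here), hence a link-homotopy; ambient isotopies are handled by transporting the surface.

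The ``if'' direction, however, has a genuine gap, and it is not only the mixed-crossing case you flag. First, the intermediate tangles of a link-homotopy from $\gamma(F_c)$ to $\gamma(F_{c'})$ are not boundaries of any disk/band surface, so ``the two crossing arcs lie on the $i$th branch of $F_c$'' only makes sense if you carry a surface (and a clover link) along the whole sequence. When you do, a single self-crossing change of $\gamma_i$ does \emph{not} lift to a single crossing change of a spatial edge: realizing it forces the two sheets of the surface through each other, which changes all four crossings among the boundary arcs of those sheets. The clover link you produce is then edge-homotopic to $c$, but its bottom tangle differs from the intended intermediate tangle by extra self-crossing changes; iterating, you end with a clover link $c''$ edge-homotopic to $c$ whose bottom tangle is only \emph{link-homotopic} to $\gamma(F_{c'})$, not equal to it --- which is the statement you set out to prove. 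Closing this loop requires a separate reconstruction step (e.g.\ that two clover links admitting surfaces with identical bottom tangles are edge-homotopic, or a normalization of the link-homotopy), and that is the real content of the lemma; your sketch does not supply it. Second, your proposed fix for the mixed case does not work as stated: a crossing between the $f_i$-sheet and the $e_i$-sheet is a crossing change between two \emph{different} spatial edges, which edge-homotopy forbids, and Lemma~\ref{lemmaPB-move} is of no help here --- it only describes full twists and pure-braid insertions at the root disk $D$, and gives no mechanism for relocating a crossing occurring somewhere along the branch onto a single band. The tool you would actually need is an ambient isotopy sliding the trivalent vertex so as to enlarge the leaf at the expense of the stem (converting the $f_i$-sheet into part of the $e_i$-annulus), and even then the resulting doubled strands require careful bookkeeping. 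As written, the ``if'' direction is a plausible plan with its hardest steps asserted rather than proved.
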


\begin{proof}[Proof of Corollary~{\rm \ref{EH-inv}}]
Let $c$ and $c'$ be $n$-clover links.
We assume that they are edge-homotopic.
By Lemma~\ref{lemmaW},
there exist disk/band surfaces $F_c$ and $F_{c'}$ of $c$ and $c'$ respectively
such that $\gamma(F_c)$ and $\gamma(F_{c'})$ are link-homotopic.
This implies that $\mu_{\gamma(F_c)}(I)=\mu_{\gamma(F_{c'})}(I)$ for any non-repeated sequence $I$ 
\cite{M}, \cite{HL}. 
On the other hand, by Theorem~\ref{mainthm2k+2},  
the set $H_c(2k+2, j)$ (resp. $H_{c'}(2k+2, j)$) is obtained from 
the Milnor numbers of $\gamma(F_c)$ (resp. $\gamma(F_{c'})$) for  $F_c$ (resp. $F_{c'}$).
Hence we have  $H_c(2k+2, j)=H_{c'}(2k+2, j)$.
\end{proof}

\begin{proof}[Proof of Theorem~{\rm \ref{thm4-clover}}]
Suppose that two $4$-clover links $c$ and $c'$ are edge-homotopic.
By Corollary~\ref{EH-inv}, $H_c(4, 4)=H_{c'}(4, 4)(\neq\emptyset)$.
By Lemma~\ref{lemmaW} there exist disk/band surfaces $F_c$ and $F_{c'}$
such that $\gamma(F_c)$ and $\gamma(F_{c'})$ are link-homotopic.
This implies that 
the Milnor numbers of $\gamma(F_c)$ and $\gamma(F_{c'})$ are equal for any non-repeated 
sequence~\cite{M},~\cite{HL}.
Since the Milnor numbers of length $\leq 3$ are always well-defined for clover links  
(see Remark~\ref{wada-inv}),
we have
$\mu_c(I)=\mu_{c'}(I)$
for any non-repeated sequence $I$ with $|I|\leq 3$. 

Conversely if $H_c(4, 4)\cap H_{c'}(4, 4)\neq\emptyset$,
then there exist disk/band surfaces $F_c$ and $F_{c'}$ of $c$ and $c'$ respectively such that
\[
\displaystyle
\sum_{S\in \mathcal{S}_4^3}\mu_{\gamma(F_c)}(S4)X_S
=\sum_{S\in \mathcal{S}_4^3}\mu_{\gamma(F_{c'})}(S4)X_S.
\]
In particular,
\[
\mu_{\gamma(F_c)}(1234)=\mu_{\gamma(F_{c'})}(1234)
~\text{and}~
\mu_{\gamma(F_c)}(2134)=\mu_{\gamma(F_{c'})}(2134).
\]
According to the link-homotopy classification theorem for string links  by N. Habegger and X. S. Lin \cite{HL}, 
for two 4-component string links (bottom tangles) that have common values of the Milnor numbers for 
non-repeated sequences with length $\leq 3$, they are link-homotopic if and only if 
their Milnor numbers for sequences $1234$ and $2134$ coincide, see also \cite[Theorem~4.3]{Y}.
This together with the hypothesis implies that
$\gamma(F_c)$ and $\gamma(F_{c'})$ are link-homotopic.
Therefore
$c$ and $c'$ are edge-homotopic by Lemma~\ref{lemmaW}.
This completes the proof.
\end{proof}


\end{document}